\documentclass{article}
\usepackage{amsmath,amsthm,amsfonts,color,todonotes}
\newtheorem{proposition}{Proposition}
\DeclareMathOperator*{\argmax}{arg\,max}
\usepackage{color}

\newtheorem{lemma}{Lemma}
\theoremstyle{definition}
\newtheorem{definition}{Definition} 

\title{The entry and exit game in the electricity markets: a 
  mean-field game approach}
\author{Ren\'e A\"id  \\  Universit\'e Paris Dauphine-PSL \and
  Roxana Dumitrescu \\ King's College London \and Peter Tankov \\
  ENSAE, Institut Polytechnique de Paris}
\date{}
\begin{document}
\maketitle
\begin{abstract}
We develop a model for the industry dynamics in the electricity
market, based on mean-field games of optimal stopping. In our model,
there are two types of agents: the renewable producers and the
conventional producers. The renewable producers choose the optimal
moment to build new renewable plants, and the conventional producers
choose the optimal moment to exit the market. The agents interact
through the market price, determined by matching the
aggregate supply
of the two types of producers with an exogenous demand function. Using
a relaxed formulation of optimal stopping mean-field games, we prove the existence
of a Nash equilibrium and the uniqueness of the equilibrium price
process. An empirical example, inspired by the UK electricity market is
presented.  The example shows that while renewable subsidies clearly lead to higher
renewable penetration, this may entail a cost to the consumer in terms
of higher peakload prices. In order to avoid rising prices, the
renewable subsidies must be combined with
mechanisms ensuring that sufficient conventional capacity remains in
place to meet the energy demand during peak periods. 
\end{abstract}
Key words: mean-field games, optimal stopping, renewable energy,
electricity markets.\\
AMS subject classifications: 91A55, 91A13, 91A80
\section{Introduction}
The world electricity sector is undergoing a major transition. The
large-scale deployment of renewable energy fueled by tax rebates, subsidies, and
feed-in tariffs in the last 20 years has profoundly
changed the market landscape: instead of large integrated utilities, a
considerable fraction of electricity is now generated by small and
medium-sized renewable
producers. Renewable generation is already affecting prices in many
countries, and its role is bound to increase.  According to
Internatonal Energy Agency \cite{IAE2017}, to limit
the global temperature increase to 1.75$^\circ$C by 2100 (Paris Agreement
range midpoint), the energy sector must reach carbon neutrality by
2060. This objective is achievable only through massive deployment of
renewable electricity. Such levels of renewable penetration may not be
compatible with the present structure of electricity markets, networks
and incentives.  In particular, the near-zero marginal cost of
electricity from renewable sources pushes down baseload wholesale electricity
prices, eroding the profits of the  conventional producers vital for
system stability. As a result, baseload conventional producers leave
the market\footnote{According to Reuters, the US coal electricity
  generation industry has been in steep decline for a decade due to
  competition from cheap and abundant gas and subsidized solar and
  wind energy, and 39,000 MW of coal-fired generation capacity was
  shut since 2017, see \texttt{\tiny{https://www.reuters.com/article/us-usa-coal-decline-graphic/\\u-s-coal-fired-power-plants-closing-fast-despite-trumps-pledge-of-support-for-industry-idUSKBN1ZC15A}}}, and the peak demand has to be met to a larger extent by
peakload plants with much higher generation costs. Paradoxically, the
increased renewable penetration may therefore lead to higher peak
prices and increased overall electricity procurement costs \cite{forbes}.

The goal of this paper is to develop a game theoretical model to
understand the dynamics of electricity markets under large scale
renewable penetration. More precisely, we use the  setting  of mean-field games (MFG) to
describe the evolution of the future electricity markets under
different incentive schemes, to understand
the effect of these policy decisions on the entry and exit of the market
players and the evolution of renewable penetration and electricity
prices. 

We consider a stylized model with two classes of agents: (i) The
intermittent (wind) producers, who generate electricity with a
stochastic capacity factor at zero marginal cost. The renewable
producers aim to determine the optimal moment to enter the market, by
paying a sunk cost. 
(ii) The conventional (gas) producers with
a fixed capacity, but a random running cost (depending in particular
on the fuel cost and the CO2 emission cost). They aim to determine the
optimal moment to exit the market. 
The two
types of agents interact through the market price, which is deduced
from the total renewable production and the total conventional
capacity through a merit order mechanism, using an exogeneously
specified deterministic demand function.

The theory of relaxed
solutions of optimal stopping MFG, developped in \cite{bouveret2018mean}, is used to determine the dynamic equilibrium trajectory for the
baseload and peakload prices and for the conventional and renewable
installed capacity. { We prove the existence
of a Nash equilibrium and the uniqueness of the equilibrium price
process. Our proofs are based on technical tools specific to this
particular problem, which are not entirely covered by
\cite{bouveret2018mean}: in particular, the price functional deduced
from the merit order mechanism is highly irregular and requires
special treatment.}

A
numerical illustration, inspired by the UK electricity market is
presented, and allows to conclude that while renewable subsidies clearly lead to higher
renewable penetration, this may entail a cost to the consumer in terms
of higher peakload prices. In order to avoid rising prices, the
renewable subsidies must be combined with market or off-market
mechanisms ensuring that sufficient conventional capacity remains in
place to meet the energy demand during peak periods. 

The paper is structured as follows. In the remaining part of the
introduction we review the relevant literautre. Section
\ref{model.sec} presents our model of the electricity market. The
relaxed solution approach and the main theoretical results are exposed
in Section \ref{relaxed.sec}. {The numerical computation of
the MFG solution is presented in Section \ref{numerics.sec} and a concrete
example is discussed in Section \ref{illustration.sec}. Finally, in Appendix, we provide the detailed proof of a technical result used in the paper.}
\paragraph{Literature review} 
Several papers consider the effect of increased renewable penetration
on market prices of electricity both in the long-term \cite{CCZ15} and
short-term \cite{KP17} setting. Recent papers also study the interactions
between energy and capacity markets in the presence of market power,
that is, the ability of producers to willingly influence prices in a
way favorable for them, and the role of renewables in such
interactions. Schwenen \cite{Sch15} provides empirical evidence of market power in
the New York capacity market. Fabra \cite{Fab18} uses a microeconomic
model to study the effects of market power in the capacity market on
the performance of energy markets. Benatia \cite{Ben18} studies strategic bidding in New York’s energy market.

Many authors have analyzed possible market evolutions and alternative
designs to ensure system reliability and economic viability. Henriot
and Glachant \cite{HG13} discuss alternative schemes for market
integration of renewable generators, Levin and Botterud \cite{LB15}
analyze and compare different market designs to ensure generator
revenue sufficiency and Rious et al.~\cite{RPR15} study the market design to encourage the development of demand response and improve system flexibility. A particularly important innovation has been the introduction of capacity markets, surveyed in \cite{BLB18}. The above papers mainly analyze the structure of existing markets and make proposals for improvement but do not use models to simulate the evolution of future markets under alternative design proposals. This line of research has been pursued by by some authors using computational agent-based modeling (see e.g. \cite{BMR17} for an application to the capacity market).

The game theory has been applied to the modeling of entry/exit decisions of agents in electricity markets in \cite{TGKM08}, where a system of two producers is considered, and the price is not affected by the agents' decisions. A considerable literature is devoted to capacity expansion games in energy markets, see e.g., \cite{ALL17}. A related strand of literature uses computational agent-based models to understand the dynamics of wholesale electricity markets, see \cite{WV08} for a review. These models allow for heterogeneous agents and a precise description of the market structure, but are computationally very intensive and do not provide any insight about the model (uniqueness of the equilibrium, robustness etc.) beyond what can be recovered from a simulated trajectory.

The machinery of MFG appears to be a promising compromise between the
complexity of computational agent-based models and the tractability of
fully analytic approaches. MFG, introduced in \cite{LL07} are
stochastic games with a large number of identical agents and symmetric
interactions where each agent interacts with the average density of
the other agents (the mean field) rather than with each individual
agent. This simplifies the problem, leading to explicit solutions or
efficient numerical methods for computing the equilibrium dynamics. In
the recent years MFG have been successfully used to model specific
sectors of electricity markets, such as price formation \cite{gomes2020mean}, electric vehicles \cite{CPTD12}, demand dispatch \cite{BB14} and storage \cite{AMT18}.
An important recent development is the introduction of
MFG of optimal stopping / obstacle mean-field games \cite{B18,bouveret2018mean,gomes2015obstacle}, which can
describe technology switches and entry/exit decisions of players. In
this paper, we follow the relaxed solution approach of
\cite{bouveret2018mean} (see also \cite{gomes2014mean} for a related
notion of relaxed solution), and extend
the theoretical results of that paper to allow for two classes of
agents and enable the agents to interact through the market price,
which is an irregular functional not covered by the assumptions in
\cite{bouveret2018mean}. 


\section{The model}
\label{model.sec}
We fix a terminal time horizon $T$ and consider a model of electricity
market with exogeneous demand and producing agents of two different
types. In this section, we describe the model for a finite number of
agents, before passing to the MFG limit in the following ones. 
\paragraph{Conventional producers} The maximum capacity of all conventional
  producers is assumed to be identical and fixed; the plants can
  operate at zero, full or partial capacity.   Each conventional producer has a marginal cost function 
$$
C^i_t: [0,1]\mapsto \mathbb R,
$$
in other words, if the plant is operating at a fraction $\xi$ of its
total capacity, $C^i_t(\xi)$ corresponds to the unit cost of producing
an additional infinitesimal amount of energy. We assume
that 
$$
C^i_t(\xi) = C^i_t + c(\xi),
$$
where $C^i_t$ is the \emph{baseline cost} process and $c:\mathbb R_+
\mapsto [0,1]$ is a deterministic strictly increasing smooth function
with $c(0) = 0$. 
This increasing property is justified by the need to start up
additional less efficient units, recall {additional} employees
etc. Consequently, for a given price level $p$, the producer offers a
fraction $F(p-C^i_t)$ of its total capacity, where $F$ is the inverse
mapping of $c$. $F$ is also a smooth increasing function, and it
satisfies $F(y)=0$ for $y\leq0$, $F(y)>0$ for $y>0$ and $F(y)=1$ for
$y\geq c(1)$. In other words, the producer
does not offer any capacity if the price is less than the production
cost, and as the price increases beyond the production cost, the
producer gradually offers more capacity, until reaching the full
capacity at a sufficiently high price level.

The baseline cost is assumed to
follow the CIR process:
\begin{align}
dC^i_t = k(\theta- C^i_t) dt + \delta \sqrt{C^i_t} dW^i_t,\quad C^i_0\label{cir}
= c_i,
\end{align}
where $(W^i)_{i\geq 1}$ are independent standard Brownian motions,
$\theta$ is the long-term average cost,
$k$ is the rate of mean reversion and $\delta$ determines the
variability of the cost process. 
The conventional producers aim
  to exit the market at the optimal time $\tau_i$. We
denote by $\omega^n_t(dx)$ the distribution of costs of conventional
producers who have not yet exited the market
(when there are a total of $n$ producers in the beginning of the game). In other words,
$$
\omega^n_t(dx) = \frac{1}{n}\sum_{i=1}^n
\delta_{C^i_t}(dx) \mathbf 1_{\tau_i >t}. 
$$
Note that we have normalized the distribution by the number of
producers, which turns out to be convenient at a later stage. {This is
equivalent} to assuming that the capacity of each producer equals
$\frac{1}{n}$. 
\paragraph{Renewable producers} Renewable producers aim to enter the market
  at the optimal time $\sigma_i$. To enter the market (build the power plant) they pay the cost $K_R$ after
  which the plant generates $S^i_t\in (0,1)$ units of electricity per unit
  time at zero cost, where the intermittent output $S^i$ follows the Jacobi process
\begin{align}
dS^i_t = \bar k(\bar \theta- S^i_t) dt + \bar\delta \sqrt{S^i_t(1- S^i_t)}
d\overline W^i_t,\quad S^i_0
= s_i\in (0,1),\label{jacobi}
\end{align}
{where $(\overline W_i)_{ i\geq 1}$ are standard Brownian 
motions}, independent from each other and from the price processes of
conventional producers. With this model we are not attempting to describe the
high-frequency variation of the power output due to the intermittency
of the renewable resource, but rather the slow variation of the
capacity factor due to climate variability and other effects.

We assume that there are $n$ potential renewable projects in
the beginning of the game, which may enter the market at some time. We
denote by $\eta^n_t(dx)$ the (potential) distribution of
output of renewable producers \emph{who have not yet entered the
  market}:
$$
\eta_t(dx) = \frac{1}{n}\sum_{i=1}^n
\delta_{S^i_t}(dx) \mathbf 1_{\sigma_i >t}. 
$$ 
We shall also need the distribution of output values of all renewable
projects:
$$
\bar \eta^n_t(dx) = \frac{1}{n}\sum_{i=1}^n
\delta_{S^i_t}(dx). 
$$ 
Note that both distributions are also normalized by the number of
agents. The normalization allows to simplify some expressions but our
model can easily be extended to the case when the number of
conventional producers is different from that of the renewable
projects, or when the capacities of these two types of agents are
different. 

We denote by $\mathcal L$ the infinitesimal
generator of the process $C^i$ and by $\overline{\mathcal L}$ that of $S^i$. These are
given by
\begin{align*}
\mathcal L f &= k(\theta-x) \frac{\partial f}{\partial x}
  +\frac{\delta^2 x}{2} \frac{\partial^2 f}{\partial x^2},\\
\overline{\mathcal L} f &= \bar k(\bar\theta-x) \frac{\partial f}{\partial x}
  +\frac{\bar\delta^2 x(1-x)}{2} \frac{\partial^2 f}{\partial x^2}.
\end{align*}
The state
spaces are denoted by $\Omega$ and $\overline \Omega$, respectively.

\paragraph{Gain function of conventional producers}

{ At time $t$ and price level $p$, each conventional producer solves
  the profit maximisation problem \[ \max_{\xi} p \xi - \int_0^\xi
    C^i_t(\xi) d\xi.\] The optimal fraction is  $ \xi^\ast := c^{-1}(p
  - C^i_t)$ if $p\geq C^i_t$. Thus, the profit of the producer is \[
    p \xi^\ast - \int_0^{\xi^\ast} C^i_t(\xi) d\xi = (p-C^i_t) F(p-C^i_t) -
                             \int_0^{F(p-C^i_t)} c(\xi) d\xi:= G(p - C^i_t),\]
where 
  \begin{align*}G(x) &= x F(x) -
                             \int_0^{F(x)} c(\xi) d\xi = \int_0^{F(x)} (x-c(\xi) ) d\xi= \int_0^x F(z) dz,
 \end{align*}                          
where the second equality results from a change of variable using the
fact that $F$ is the inverse of $c$. }

The problem of the
  individual conventional producer is then
$$
{\max_{\tau\in \mathcal T([0,T])} \mathbb E\left[\int_0^{\tau} e^{-\rho t}(G(P^n_t - C^i_t)-\kappa_C) dt + K_C e^{-(\gamma_C+\rho) \tau} \right],}
$$
where {$\mathcal{T}([0,T])$ represents the set of stopping times with respect to the filtration generated by $(W_t^i)_{i \geq 1}$,$(\bar{W}^i)_{i \geq 1}$, with values between $0$ and $T$}.
Here $P^n_t$ is the market price of electricity, determined from the
strategies of all agents via the merit order mechanism described below
in this section,  $\kappa_C$ is the fixed cost per
unit of time that the producer pays until exiting the market and
$e^{-\gamma_C t}  K_C$ is the  value recovered if the plant is
sold at time $t$, where $\gamma_C$ is the depreciation rate of the
conventional plant. To avoid a discontinuity at the terminal date $T$, we assume that
the plant is sold at date $T$ in any case. 
This maximization problem may equivalently be
written as
$$
{\max_{\tau\in \mathcal T([0,T])}} \mathbb E\left[\int_0^{T} e^{-\rho t} \mathbf
  1_{t< \tau}\left\{G(P^n_t - C^i_t)- \kappa_C-e^{-\gamma_C t} K_C(\rho +
    \gamma_C)\right\} dt \right],
$$
or, with a more compact notation, 
\begin{align*}
&{\max_{\tau\in \mathcal T([0,T])}} \mathbb E\left[\int_0^{T} e^{-\rho t} \mathbf
  1_{t< \tau}\left\{G(P^n_t - C^i_t)+f_C(t)\right\} dt \right],\\
&f_C(t) = - \kappa_C-e^{-\gamma_C t} K_C(\rho +
    \gamma_C). 
\end{align*}

\paragraph{Gain function of renewable producers}
The renewable producers always bid their
  full (but intermittent) capacity. We assume that the prices are
  always positive, and neglect the forecasting error of the
  producers. 
The problem of the individual agent is then
$$
{\max_{\sigma\in \mathcal{T}([0,T])} \mathbb E\left[\int_{\sigma}^T e^{-\rho t}
  (P^n_tS^i_t - \kappa_R) dt -K_R
e^{-\rho \sigma }+K_R e^{-\rho T - \gamma_R(T -
  \sigma)}\right],}
$$
where $\sigma$ is the time of entering the market, $\kappa_R$ is the
fixed cost of owning the plant, $K_R$ is the fixed cost of building
the plant and {$K_R ^{ - \gamma_R(T -
  \sigma)}$} is the value of the plant at time $T$, where
$\gamma_R$ is the depreciation rate of the renewable plant. Once
again, to minimize the boundary effect, we assume that
the agent recovers the value of the plant reduced by the depreciation
rate at the terminal date $T$. This
maximization problem may equivalently be written as 
{$$
\max_{\sigma\in \mathcal{T}([0,T])} \mathbb E\left[\int_0^T \mathbf
  1_{t<\sigma}e^{-\rho t}\left\{
-P^n_t S^i_t + \kappa_R + \rho K_R + \gamma_R K_R
e^{-(\rho+\gamma_R)(T-t)} 
\right\}\right],
$$}
or, with a more compact notation,
{\begin{align*}
&\max_{\sigma\in \mathcal{T}([0,T])} \mathbb E\left[\int_0^T \mathbf
  1_{t<\sigma}e^{-\rho t}\left\{
-P^n_t S^i_t +f_R(t)\right\}\right],\\
&f_R(t) = \kappa_R + \rho K_R+ \gamma_R K_R
e^{-(\rho+\gamma_R)(T-t)}. 
\end{align*}}

\paragraph{Baseline supply} We assume that in addition to the
renewable and conventional producers considered above there is a
baseline supply  by conventional producers, which will never leave
the market (e.g., state-owned producers, which ensure the network
security) and do not take part in the game. This baseline supply at price level $p$ is denoted by
$F_0(p)$ where the function $F_0$ is increasing and satisfies $F_0(0)=0$, $F_0(p+h)-F(p)\geq c
h$ for some $c>0$ and all $p,h\geq 0$. This last assumption is imposed
for techical reasons and ensures the continuity of the equilibrium
price process. 

The total supply by the conventional producers at price level $p$,
including the baseline supply, is
therefore given
by 
$$
\int_{\Omega} F(p-x) \omega^n_t(dx) + F_0(p).
$$ 
The total renewable supply at time $t$,
on the other hand, is given by
$$
{R^n_t = \int_{\overline \Omega}
x(\bar \eta^n_t(dx)-\eta^n_t(dx))}.
$$ 
\paragraph{Price formation}

In our model, the different agents are coupled
through the market price, determined by matching the exogeneous demand process
$\overline D_t$, to the aggregate supply function of market
participants, in a stylized version of the day-ahead electricity market. In other words, since the residual demand after
subtracting the renewable supply which is always present in the
market, must be
met by conventional producers, the electricity price $P^n_t$ is defined
as follows. 
\begin{align*}
P^n_t = \inf\{P: (\overline D_t - R^n_t)^+ \leq \int_{\Omega} F(P-x)
  \omega^n_t(dx) + F_0(P)\}\wedge \overline P,
\end{align*}
where $\overline P$ is the price cap in the market and we use the
convention $\inf\emptyset = +\infty$. When
the price cap $\overline P$ is reached, the demand may not be entirely
satisfied by the producers. 

Our aim here is not to model the daily price fluctuations but rather the
slow evolution of the average market price due to changes of structure
of 
electricity supply. However, to make the illustration more realistic,
in section \ref{illustration.sec} we shall consider separately the peak
price (Mon-Fri, 7AM-8PM) and the off-peak price. This extension
presents no technical difficulties, and therefore, to simplify
notation, we consider a single price in the rest of the paper. 

\section{Relaxed MFG formulation of the problem}
\label{relaxed.sec}
To simplify the resolution and the study of price equilibria, we place
ourselves from now on in the MFG framework, where the
number of agents (both conventional and renewable) is assumed to be
infinite. As in many papers on mean-field games, we do not study the
convergence of the $n$-player game to the MFG but analyze
the MFG framework directly. We denote the limiting versions of the distributions
$\omega^n_t$, $\eta^n_t$ and $\bar\eta^n_t$ by $\omega_t$, $\eta_t$
and $\bar\eta_t$, respectively, and the limiting market price and
renewable demand by $P_t$ and $R_t$. 
Since in our game the idiosyncratic noises of agents are independent
and there is no common noise, these limiting measures and processes are assumed to be deterministic. 
Note that $\omega_t$ and $\eta_t$ are
not probability distributions: the total mass of both these measures is
decreasing with $t$. 
We finally assume that the demand $\overline D_t$
is deterministic. 

We follow the relaxed optimal stopping MFG approach, introduced in
\cite{bouveret2018mean}, adapting it to the present setting of electricity
markets. To this end, we first recall the topology on flows of
measures used in this reference. {Let $V_\Omega$ be the space of flows of signed bounded
measures on ${\Omega}$, $(m_t(\cdot))_{0\leq t \leq
  T}$ such that: for every $t\in [0,T]$, $m_t$ is a signed bounded
measure on ${\Omega}$, for every $A\in \mathcal B({\Omega})$, the
mapping $t\mapsto m_t(A)$
is measurable, and $\int_0^T \int_{{\Omega}} m_t(dx)\, dt <\infty$. To each
flow $m\in V_\Omega$, we associate a signed measure on $[0,T]\times {\Omega}$ defined by $\mu(dt,dx):= m_t(dx)\,
dt$, and we endow $V_\Omega$ with the topology of weak
convergence of the associated measures. $V_{\overline\Omega}$ is
defined in the same way.}

Given a deterministic measurable price process $(P_t)_{t\geq 0}$, 
the relaxed solution approach consists in replacing the optimal stopping
problem of
individual conventional producer, 
\begin{align}
\sup_\tau\mathbb E\left[\int_0^T e^{-\rho t}[G(P_t - C_t) +f_C(t)] \mathbf 1_{t< \tau }dt \right],\label{optstrict}
\end{align}
by its relaxed version
\begin{align}
\sup_{\omega \in \mathcal A(\omega_0)} \int_0^T \int_\Omega e^{-\rho t} [G(P_t-x)
  +f_C(t)] \omega_t(dx) \,dt,\label{ind.conv}
\end{align}
where the set
$\mathcal A(\omega_0)$ contains all flows of positive bounded measures
$(\hat \omega_t)_{0\leq t\leq T} \in V_\Omega$ satisfying 
$$
\int_\Omega u(0,x)\omega_0(dx)  + \int_0^T \int_{\Omega} \left\{\frac{\partial u}{\partial t} +
  \mathcal L u\right\} \hat \omega_t(dx) \, dt \geq 0
$$
for all $u\geq 0, u \in C^{1,2}([0,T]\times \Omega)$ such that $\frac{\partial
  u}{\partial t} + \mathcal Lu$ is bounded.

In the relaxed formulation of the optimal stopping problem, instead of
looking for an optimal stopping time, one looks for the optimal
\emph{measure flow}, corresponding, at each time $t$, to the
distribution of agents in a population which have not yet exited the
game. The precise relationship between the optimal stopping problem \eqref{optstrict}
and its relaxed version \eqref{ind.conv} is described in
\cite{bouveret2018mean}. In particular, under appropriate assumptions,
it holds that 
$$
\sup_{\omega \in \mathcal A(\omega_0)} \int_0^T \int_\Omega e^{-\rho t} [G(P_t-x)
  +f_C(t)] \omega_t(dx) \,dt = \int_{\Omega} v(0,x) \omega_0(dx),
  $$
  where $v(0,x)$ is the value function of the optimal stopping problem
  \eqref{optstrict} at time $t=0$:
  $$
v(0,x)  = \sup_\tau\mathbb E\left[\int_0^T e^{-\rho t}[G(P_t - C^{(0,x)}_t) +f_C(t)] \mathbf 1_{t< \tau }dt \right]
 $$

Similarly, the optimal stopping problem of individual renewable producer 
\begin{align*}
\sup_\sigma\mathbb E\left[\int_0^T e^{-\rho t}\left\{-P_t S_t +f_R(t)\right\}\mathbf
  1_{t< \sigma }dt \right] 
\end{align*}
is replaced by its relaxed version
\begin{align}
\sup_{\eta\in \overline{\mathcal A}(\eta_0) }\int_0^T
  \int_{\overline\Omega} e^{-\rho t}\left\{-P_t x + f_R(t)\right\}\eta_t(dx)\, dt, \label{ind.ren}
\end{align}
where the set $\overline{\mathcal A}(\eta_0)\subset V_{\overline \Omega}$ is defined similarly to
${\mathcal A}(\omega_0).$

Given the flows $(\omega_t)_{0\leq t\leq T}$ and $(\eta_t)_{0\leq t\leq T}$,  the price
process $(P_t)_{0\leq t\leq T}$ is defined as follows. 
\begin{align*}
P_t = \inf\{P: (\overline D_t - R_t)^+ \leq \int_\Omega F(P_t-x)
  \omega_t(dx) + F_0(P_t)\}\wedge \overline P,\quad 0\leq t\leq T,
\end{align*}
where ${R_t = \int_{\overline\Omega}
x(\bar \eta_t(dx)-\eta_t(dx))}$. We denote the price, defined in
this way, by $P_t(\omega_t, \eta_t)$.\\

We now introduce the definition of a relaxed Nash equilibrium.
\begin{definition}\label{defnash}
The Nash equilibrium of the relaxed mean-field game is the couple $(\omega^*_t,\eta^*_t)$
such that for any other measure $\omega \in \mathcal A(\omega_0)$, 
\begin{multline*}
\int_0^T \int_{\Omega} e^{-\rho t} [G(P_t(\omega^*_t,\eta^*_t)-x)
  +f_C(t)] \omega_t(dx) \,dt \\\leq \int_0^T \int_{\Omega} e^{-\rho t} [G(P_t(\omega^*_t,\eta^*_t)-x)
  +f_C(t)] \omega^*_t(dx) \,dt,
\end{multline*}
and for any other measure $\eta \in \overline{\mathcal A}(\eta_0)$
\begin{multline*}
\int_0^T \int_{\overline\Omega} e^{-\rho t}  [-P_t(\omega^*_t,\eta^*_t) x +
f_R(t)]\eta_t(dx)\, dt \\\leq \int_0^T \int_{\overline\Omega} e^{-\rho t}  [-P_t(\omega^*_t,\eta^*_t) x +
f_R(t)]\eta^*_t(dx)\, dt. 
\end{multline*}
\end{definition}

In the rest of this section we study the solutions of the relaxed MFG
problem. 
Firsly, the following lemma establishes the existence of solution for the
individual relaxed optimal stopping problems. 
\begin{lemma}\label{pfixed}
Let the price process $(P_t)_{0\leq t\leq T}$ be fixed, and asume that
it has bounded variation on $[0,T]$.
\begin{itemize}
\item[i.] Let $\omega_0$ satisfy 
$$
\int_\Omega \ln\{1+|x|\}\omega_0(dx) <\infty. 
$$
Then there exists $\omega^* \in
\mathcal A(\omega_0)$ which solves the problem \eqref{ind.conv}.
\item[ii.] Let $\eta_0$ satisfy 
$$
\int_{\overline\Omega} \ln\{1+|x|\}\eta_0(dx) <\infty. 
$$ 
Then there exists $\eta^* \in
\overline{\mathcal A}(\eta_0)$ which solves the problem \eqref{ind.ren}.
\end{itemize}
\end{lemma}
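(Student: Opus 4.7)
The plan is to apply the direct method of the calculus of variations to each part: take a maximising sequence $(\omega^n)_n \subset \mathcal A(\omega_0)$ (resp.\ $(\eta^n)_n \subset \overline{\mathcal A}(\eta_0)$), extract a subsequence converging in the weak topology of $V_\Omega$ (resp.\ $V_{\overline\Omega}$), verify that the limit remains admissible, and conclude that it attains the supremum by continuity of the objective functional.

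I would begin with compactness. Using the constant test function $u\equiv 1$ (or a smooth cut-off equal to $1$ on a growing compact) in the defining inequality of $\mathcal A(\omega_0)$ yields a uniform bound $\omega_t(\Omega)\leq \omega_0(\Omega)$ for a.e.\ $t$, and similarly for $\eta$; since $G$ is bounded ($0\leq G(P_t-x)\leq \int_0^{\overline P} F(z)\,dz$ because $F\leq 1$ and $G\equiv 0$ on $(-\infty,0]$), and the prefactors $f_C,f_R$ are bounded, the two functionals are finite and uniformly bounded over the admissible set. For part (ii), $\overline\Omega=[0,1]$ is compact so weak relative compactness of the associated measures on $[0,T]\times\overline\Omega$ is automatic. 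For part (i), $\Omega=[0,\infty)$ is non-compact, and I would establish tightness in the $x$ direction via the test function $u(x)=\log(1+x)$, which is smooth on $[0,\infty)$ and for which a direct calculation shows $\mathcal Lu$ bounded; combined with $\int\log(1+x)\omega_0(dx)<\infty$, this yields a uniform control on $\int_0^T\!\int\log(1+x)\,\omega^n_t(dx)\,dt$ and hence tightness by Chebyshev. The analogous argument handles part (ii) trivially.

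Next I would treat closedness and continuity. The admissibility condition is linear in the associated measure $\mu^n(dt,dx)=\omega^n_t(dx)\,dt$ for each fixed test function $u$, so the inequality passes to weak limits for every admissible $u$; hence $\mathcal A(\omega_0)$ and $\overline{\mathcal A}(\eta_0)$ are weakly closed. For continuity of the objective, I use that a function $P$ of bounded variation on $[0,T]$ has at most countably many discontinuities; since the associated measures contain the Lebesgue factor $dt$, modifying $P$ on this null set changes no integral, and I may WLOG take $P$ continuous. Then $(t,x)\mapsto e^{-\rho t}[G(P_t-x)+f_C(t)]$ and $(t,x)\mapsto e^{-\rho t}[-P_tx+f_R(t)]$ are bounded and continuous on $[0,T]\times\Omega$ (resp.\ $\overline\Omega$), so combined with the tightness established above the weak convergence of $(\mu^n)$ yields convergence of the objective. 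The limit therefore attains the supremum, giving the desired $\omega^\ast$ and $\eta^\ast$.

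The hard part will be the tightness estimate in part (i). Although $\mathcal L\log(1+x)$ is bounded, it has no definite sign on $[0,\infty)$, so plugging $u=\log(1+x)$ into the admissibility inequality alone gives only a one-sided estimate. To extract genuine coercivity I would either use a time-weighted test function $u(t,x)=e^{\beta t}\log(1+x)$ with $\beta$ chosen large enough to dominate the positive part of $\mathcal L\log(1+x)$, or alternatively establish a comparison $\omega_t\leq \mu^0_t$, where $\mu^0_t$ is the marginal at time $t$ of the unstopped CIR process started from $\omega_0$, by extending the sub-solution argument of \cite{bouveret2018mean}; standard moment estimates for CIR then give uniform tightness of the whole family $\{\omega_t : \omega\in\mathcal A(\omega_0),\ t\in[0,T]\}$. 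A secondary (routine) technicality is the approximation of the constant $u\equiv 1$ and of $\log(1+x)$ by genuine $C^{1,2}$ functions with bounded $\partial_t+\mathcal L$, which I would carry out by multiplication with a smooth spatial cut-off and a limiting argument.
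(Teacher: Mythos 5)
Your overall strategy (maximizing sequence, weak compactness of the admissible set, closedness, continuity of the objective) is the same as the paper's; the paper outsources the compactness and closedness to Lemma~3.8 of \cite{bouveret2018mean} (noting in a footnote that only linear growth of the coefficients is needed), whereas you rederive them from test functions. Your rederivation is essentially sound, and you correctly identify both the role of the hypothesis $\int_\Omega \ln\{1+|x|\}\,\omega_0(dx)<\infty$ and the fact that $u=\log(1+x)$ alone gives only a one-sided estimate; your proposed fixes (a time-weighted supersolution, or comparison with the unstopped CIR marginals) are workable, though the sign of the exponential weight needs care: to bound $\int_0^T\!\int \log(1+x)\,\omega_t(dx)\,dt$ from above via the admissibility inequality you need a nonnegative $u$ with $\partial_t u+\mathcal L u\leq -c\log(1+x)$ outside a compact set, e.g.\ $u(t,x)=e^{-\beta t}\log(1+x)$ with $\beta>0$ large, or directly $u(t,x)=\mathbb E[\int_t^T\log(1+C^{t,x}_s)\,ds]$.

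The genuine gap is in your continuity step. You write that since a BV function has at most countably many discontinuities, you ``may WLOG take $P$ continuous'' after modifying it on a Lebesgue-null set. This is false: a BV function with a jump (e.g.\ $P_t=\mathbf 1_{t\geq T/2}$) admits no continuous modification, so the integrand $(t,x)\mapsto G(P_t-x)$ is genuinely discontinuous on $D\times\Omega$, where $D$ is the jump set of $P$, and weak convergence of $\mu^n=\omega^n_t(dx)\,dt$ does not by itself give convergence of its integral. The conclusion you want is nevertheless true, and can be repaired in either of two ways: (a) invoke the version of the mapping theorem for bounded integrands that are continuous $\mu^*$-a.e., noting that $\mu^*(D\times\Omega)=\int_D\omega^*_t(\Omega)\,dt=0$ because the $t$-marginal of $\mu^*$ is absolutely continuous with respect to Lebesgue measure; or (b) do what the paper does: approximate $P$ in $L^1([0,T])$ by uniformly bounded continuous functions $P^m$, use the Lipschitz property of $G$ (constant $1$) together with the uniform mass bound $\int_\Omega\omega^n_t(dx)\leq\int_\Omega\omega_0(dx)$ (obtained from the test function $u(t,x)=\int_t^T f(s)\,ds$) to show that the resulting error $\int_0^T|P_t-P^m_t|\int_\Omega\omega^n_t(dx)\,dt$ vanishes as $m\to\infty$ uniformly in $n$, pass to the limit in $n$ for each fixed $m$ by weak convergence, and then let $m\to\infty$. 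Either repair is short, but as written your reduction to the continuous case does not stand.
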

\begin{proof}
Part i. 
Choose a maximizing sequence of flows of measures
$(\omega^n_t)_{t\in[0,T]}^{n\geq 1}\subset \mathcal A(\omega_0)$.
By Lemma 3.8 in \cite{bouveret2018mean}, the set $\mathcal A(\omega_0)$ is
sequentially compact\footnote{Our processes do
  not satisfy Assumption (X-SDE) of \cite{bouveret2018mean}, but Lemma 3.8 only needs linear growth of coefficients}, therefore this sequence has a subsequence, also
denoted by $(\omega^n)$, which converges to a limit $\omega^*
\in \mathcal A(m^*_0)$. It remains to show that $\omega^*$ is a
maximizer of \eqref{ind.conv}. Let $(P^m_t)^{m\geq 1}$ be a sequence
of uniformly bounded continuous mappings approximating $P_t$ in $L^1([0,T])$. Note that $G$ is a Lipschitz function, with Lipschitz
constant $1$. Then,
\begin{align}
\int_0^T dt\int_\Omega [G(P_t-x)+f_C(t)] \omega^n_t(dx) &= \int_0^T dt\int_\Omega
[G(P^m_t-x) +f_C(t)] \omega^n_t(dx) \notag\\ &+ \int_0^T dt\int_\Omega
[G(P_t-x)-G(P^m_t-x)] \omega^n_t(dx).\label{2terms}
\end{align}
The second term satisfies
\begin{align}
\left|\int_0^T dt\int_\Omega
[G(P_t-x)-G(P^m_t-x)] \omega^n_t(dx)\right| \leq \int_0^T |P_t -
P^m_t| \int_\Omega \omega^n_t(dx)\label{2ndterm}
\end{align}
Taking the test function 
$$
u(t,x) = \int_t^T f(s) ds
$$
with $f\geq 0$ continuous in the definition of $\mathcal A(\omega_0)$,
we have:
$$
\int_\Omega \omega_0(dx) \int_0^T f(s) ds \geq \int_0^T f(s) ds
\int_\Omega \omega^n_t(dx).
$$
This implies that for every $n$, $t$-almost everywhere, 
$$
\int_\Omega \omega^n_t(dx) \leq \int_\Omega \omega_0(dx),
$$
and so \eqref{2ndterm} converges to zero as $m\to \infty$, uniformly
on $n$. On the other hand, by weak convergence, the first term in
\eqref{2terms} converges to 
$$
\int_0^T dt\int_\Omega
[G(P^m_t-x) +f_C(t)] \omega^*_t(dx),
$$
which, once again, converges to 
$$
\int_0^T dt\int_\Omega
[G(P_t-x) +f_C(t)] \omega^*_t(dx)
$$
as $m\to \infty$. Combining the three limits and using the fact that
the convergence of \eqref{2ndterm} is uniform on $n$, the proof is
completed.\\

Part ii. This part is shown similarly to the first part. 
\end{proof}

In the mean-field game context, the price becomes a function of
$\eta_t$ and $\omega_t$.
The following technical lemma establishes the properties of the price
process. Its proof is presented in the appendix. 
\begin{lemma}\label{bvariation}${}$
\begin{itemize}
\item[i.] Let $\omega \in \mathcal A(\omega_0)$ and $\eta \in \overline{\mathcal
A} (\eta_0)$ and assume that the demand $\overline D$ has bounded
variation on $[0,T]$. Then the price process $P_t(\omega_t,\eta_t)$ has
bounded variation on $[0,T]$ as well. 
\item[ii.] Let $(\omega^n_t)$ and $(\eta^n_t)$ be sequences of
  elements of $\mathcal A (\omega_0)$ and $\overline{\mathcal
    A}(\eta_0)$, converging, respectively, to $\omega^* \in \mathcal A
  (\omega_0)$ and $\eta^*\in \overline{\mathcal
    A}(\eta_0)$.  Assume that the demand $\overline D$ has bounded
variation on $[0,T]$. Then there exists a subsequence $(n_k)_{k\geq
  0}$ such that  $P_t(\omega^{n_k}_t,\eta^{n_k}_t)$ converges to
  $P_t(\omega^*_t,\eta^*_t)$ in $L^1([0,T])$. 
\end{itemize}
\end{lemma}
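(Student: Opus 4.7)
The natural starting point is to rewrite the price implicitly as $\Phi_t(P_t)=Y_t$ (away from the cap), with
$$\Phi_t(P):=\int_\Omega F(P-x)\,\omega_t(dx)+F_0(P), \qquad Y_t:=(\overline D_t-R_t)^+.$$
The assumption $F_0(p+h)-F_0(p)\geq ch$ forces $\Phi_t$ to be strictly increasing with slope at least $c$, uniformly in $t$, so $\Phi_t^{-1}$ exists and is $\frac{1}{c}$-Lipschitz uniformly in $t$. Writing $P_t=\Phi_t^{-1}(Y_t)$ and splitting the inversion across $t$ and $s$ yields
$$|P_t-P_s|\leq \frac{1}{c}|Y_t-Y_s|+\frac{1}{c}|\Phi_t(P_s)-\Phi_s(P_s)|,$$
so that the BV of $P$ reduces to the BV of $Y$ and to a partition-sum estimate for $t\mapsto \Phi_t(P)$ that must be controlled \emph{uniformly in} $P$.

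\textbf{Key estimates.} To get uniform BV of $t\mapsto \Phi_t(P)$, I would invoke the representation from \cite{bouveret2018mean} expressing every $\omega\in\mathcal A(\omega_0)$ as the subprobability flow of a randomized stopped diffusion, $\omega_t(dx)=\mathbb P(C_t\in dx,\,t<\tau)$, so that
$$\int_\Omega F(P-x)\omega_t(dx)=\mathbb E\bigl[F(P-C_{t\wedge\tau})\bigr]-\mathbb E\bigl[F(P-C_\tau)\mathbf 1_{\tau\leq t}\bigr].$$
An Ito argument shows the first term is Lipschitz in $t$ with slope $\sup_P\|\mathcal L F(P-\cdot)\|_{L^1(\omega)}$, which is bounded uniformly in $P$ by the smoothness of $F$, the uniform bound $\|F'\|_\infty,\|F''\|_\infty<\infty$, and the usual moment estimates for the CIR dynamics. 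The second term is nondecreasing in $t$ and bounded by $\|F\|_\infty\leq 1$, also uniformly in $P$. Summing over a partition therefore gives a bound free of both the partition and of $P$. The same decomposition applied with $\phi(x)=x$ on the compact state space $\overline\Omega=[0,1]$ gives BV of $\int x\,\eta_t(dx)$ and $\int x\,\bar\eta_t(dx)$, hence of $R_t$; combined with the assumed BV of $\overline D_t$ this yields BV of $Y_t$ and closes Part (i).

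\textbf{Part (ii).} Because the BV bounds above depend only on the model coefficients and on $\omega_0,\eta_0$, they are uniform in $n$. The plan is then to apply Helly's selection theorem to extract a subsequence along which $R^{n_k}_t\to \widetilde R_t$ and $\Phi^{n_k}_t(P)\to\widetilde\Phi_t(P)$ pointwise a.e.\ for every $P$ in a countable dense subset of $[0,\overline P]$. The weak convergence of $\omega^{n_k}$ and $\eta^{n_k}$ in $V_\Omega$ and $V_{\overline\Omega}$ integrated against time–space test functions identifies these pointwise limits as $R^*_t$ and $\Phi^*_t(P)$ for a.e.\ $t$. Continuity in $P$ of $\Phi^*_t$ together with the $\frac{1}{c}$-Lipschitz inversion of $\Phi_t$ then transfers pointwise convergence to $P^{n_k}_t\to P^*_t$ a.e., and the uniform bound $|P^{n_k}_t|\leq \overline P$ upgrades this via dominated convergence to convergence in $L^1([0,T])$.

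\textbf{Main obstacle.} The hard point is the BV control of $t\mapsto \Phi_t(P)$ \emph{uniformly in} $P$: pointwise-in-$P$ BV is immediate from the stopped-diffusion representation, but the partition sum driving the inversion argument demands $\sum_i \sup_P|\Phi_{t_{i+1}}(P)-\Phi_{t_i}(P)|$ bounded independently of the partition. The Ito-based splitting above makes this work only because both the Lipschitz and the monotone pieces admit $P$-uniform variation bounds, which ultimately rests on the smoothness of $F$, the boundedness of its derivatives, and the moment control for the CIR and Jacobi dynamics. A secondary subtlety in (ii) is that weak convergence of flows in $V_\Omega$ does not a priori give pointwise convergence of the time marginals $\omega^n_t$; uniform BV in $n$ combined with Helly's selection is precisely what restores this.
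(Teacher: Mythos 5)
Your architecture is genuinely different from the paper's and, at the level of ideas, attractive: you invert the supply function directly, using the lower bound $F_0(p+h)-F_0(p)\geq ch$ to make $\Phi_t^{-1}$ a $\tfrac1c$-Lipschitz map, and you correctly identify that the whole difficulty is the partition-sum bound $\sum_i\sup_P|\Phi_{t_{i+1}}(P)-\Phi_{t_i}(P)|\leq C$ (sup and sum do not commute, so pointwise-in-$P$ BV is not enough). Your monotone-plus-Lipschitz decomposition does close this: the Lipschitz part is uniform in $P\in[0,\overline P]$ because $F'$, $F''$ are supported where $|x|\leq\overline P$, and for the monotone part $\sup_P\bigl(B_P(t_{i+1})-B_P(t_i)\bigr)\leq\mathbb P(t_i<\tau\leq t_{i+1})$ telescopes to at most $1$. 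The Helly/identification scheme for part (ii) is also essentially the paper's argument (the paper extracts $L^1$-convergent subsequences from the uniform BV bounds and uses the Lipschitz interpolation map $\Theta_n$ in place of your $\Phi_t^{-1}$).

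The genuine gap is the very first move of your ``Key estimates'': you assume that every $\omega\in\mathcal A(\omega_0)$ can be written as $\omega_t(dx)=\mathbb P(C_t\in dx,\,t<\tau)$ for a randomized stopping time $\tau$. The set $\mathcal A(\omega_0)$ is defined only by the Fokker--Planck \emph{inequality} against nonnegative test functions; the converse representation is the hard ``mimicking'' direction of the equivalence in \cite{bouveret2018mean}, and the paper explicitly flags in a footnote that its diffusions (CIR and Jacobi, both degenerate at the boundary) do not satisfy the assumptions of that reference, invoking only the compactness lemma. Without the representation, your terms $A_P$ and $B_P$ are not defined, and in particular the measure $\mathbb P(\tau\in dt)$ whose total mass gives your telescoping bound does not exist a priori. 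This is exactly the obstacle the paper's Steps 2--3 are built to circumvent: by testing the Fokker--Planck inequality against a whole family $(\psi_k)_{k\leq n}$ simultaneously and bounding $\sum_k\int F^n_k\psi_k'$ by $C\max_t\sum_k|\psi_k(t)|$, then mollifying, it obtains $\int_0^T\max_k|\tfrac{d}{dt}F^{n,m}_k|\,dt\leq C$ by duality --- which is precisely the $P$-uniform (at grid level) variation bound your decomposition was meant to deliver, but extracted from the inequality alone. To repair your proof you would either have to prove the representation theorem for these degenerate diffusions, or replace the probabilistic decomposition by the test-function duality argument, at which point you have essentially reconstructed the paper's Steps 2--4 with $\Phi_t^{-1}$ playing the role of $\Theta_n$.
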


We now prove the existence of a relaxed Nash equilibrium.
\begin{proposition}
There exists a Nash equilibrium for the relaxed MFG problem.
\end{proposition}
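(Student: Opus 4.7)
The plan is to construct a Nash equilibrium by applying a Kakutani--Fan--Glicksberg fixed-point theorem to the best-response correspondence on the product set $\mathcal{K} := \mathcal{A}(\omega_0) \times \overline{\mathcal{A}}(\eta_0) \subset V_\Omega \times V_{\overline\Omega}$.

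First, I would verify that $\mathcal{K}$ is nonempty, convex (the admissibility inequality defining $\mathcal{A}(\omega_0)$ and $\overline{\mathcal{A}}(\eta_0)$ is linear in the flow) and sequentially compact; the last point follows by applying Lemma~3.8 of \cite{bouveret2018mean} coordinate-wise, since the CIR and Jacobi coefficients in \eqref{cir}--\eqref{jacobi} have (at most) linear growth. For each $(\omega,\eta)\in\mathcal{K}$, Lemma~\ref{bvariation}(i) ensures that the associated price $t\mapsto P_t(\omega_t,\eta_t)$ has bounded variation on $[0,T]$, so Lemma~\ref{pfixed} guarantees that the individual relaxed problems \eqref{ind.conv} and \eqref{ind.ren} with price fixed to $P(\omega,\eta)$ each admit a maximizer. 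I then define the set-valued best-response map $\Phi : \mathcal{K} \rightrightarrows \mathcal{K}$ by
\[
\Phi(\omega,\eta) := \argmax_{\hat\omega\in\mathcal{A}(\omega_0)} J_C(\hat\omega; P(\omega,\eta)) \;\times\; \argmax_{\hat\eta\in\overline{\mathcal{A}}(\eta_0)} J_R(\hat\eta; P(\omega,\eta)),
\]
where $J_C$ and $J_R$ denote the objective functionals of \eqref{ind.conv} and \eqref{ind.ren} respectively. These argmax sets are nonempty by Lemma~\ref{pfixed} and convex because both functionals are \emph{linear} in the measure flow. Any fixed point $(\omega^*,\eta^*)\in\Phi(\omega^*,\eta^*)$ is, by construction, a relaxed Nash equilibrium in the sense of Definition~\ref{defnash}.

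The main obstacle is to establish that $\Phi$ has a closed graph, because the price functional $(\omega,\eta)\mapsto P(\omega,\eta)$ is highly irregular: small perturbations of the flows can produce abrupt changes in the price. Here I would invoke Lemma~\ref{bvariation}(ii): given sequences $(\omega^n,\eta^n)\to(\omega^*,\eta^*)$ in $\mathcal{K}$ and $(\hat\omega^n,\hat\eta^n)\in\Phi(\omega^n,\eta^n)$ with $(\hat\omega^n,\hat\eta^n)\to(\hat\omega^*,\hat\eta^*)$, I may extract a subsequence along which $P(\omega^n,\eta^n)\to P(\omega^*,\eta^*)$ in $L^1([0,T])$. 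The optimality inequality $J_C(\hat\omega^n; P(\omega^n,\eta^n)) \geq J_C(\omega; P(\omega^n,\eta^n))$, for arbitrary $\omega\in\mathcal{A}(\omega_0)$, can then be passed to the limit in $n$ using the same two-step approximation as in the proof of Lemma~\ref{pfixed}: the Lipschitz property of $G$ together with the uniform mass bound $\int_\Omega \hat\omega^n_t(dx)\leq \int_\Omega\omega_0(dx)$ shows that replacing $P(\omega^n,\eta^n)$ by $P(\omega^*,\eta^*)$ in the integrand introduces an error bounded by $\|P(\omega^n,\eta^n)-P(\omega^*,\eta^*)\|_{L^1([0,T])}\to 0$, while weak convergence of $\hat\omega^n$ against a continuous-in-$t$ approximation of $G(P(\omega^*,\cdot)-\cdot)$ (bounded, and with compact $x$-support because $G\equiv 0$ on $(-\infty,0]$) handles the remaining term. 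The analogous argument applied to $\hat\eta^n$, with the bounded integrand $-P_t x+f_R(t)$ on the compact set $\overline\Omega$, yields the renewable inequality, so $(\hat\omega^*,\hat\eta^*)\in\Phi(\omega^*,\eta^*)$.

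With nonempty convex values, a convex sequentially compact domain, and closed graph established, a sequential version of the Kakutani--Fan--Glicksberg theorem produces a fixed point of $\Phi$, which is the sought Nash equilibrium.
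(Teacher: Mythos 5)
Your proposal is correct and follows essentially the same route as the paper: a Kakutani--Fan--Glicksberg fixed point for the best-response correspondence, with the closed-graph property established via the $L^1$ price convergence of Lemma~\ref{bvariation}(ii), the Lipschitz property of $G$ with the uniform mass bound, and weak convergence against continuous-in-time approximations of the price. The only cosmetic difference is that for the renewable term the paper factors the integrand as $P_t\cdot\int x\,\bar\eta^n_t(dx)$ and uses a uniform BV bound on the second factor, whereas you reuse the same two-step approximation as for the conventional term; both work.
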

\begin{proof}
Following the ideas of Theorem 4.4 in \cite{bouveret2018mean}, we define the set valued mapping
\begin{align*}
\Theta: (\omega,\eta)\mapsto &\argmax_{\hat\omega \in \mathcal A(\omega_0)}\int_0^T \int_{\Omega} e^{-\rho t} [G(P_t(\omega_t,\eta_t)-x)
  +f_C(t)] \hat\omega_t(dx) \,dt\\
&\times \argmax_{\hat\eta \in \overline{\mathcal A}(\eta_0)}\int_0^T \int_{\overline\Omega} e^{-\rho t}  [-P_t(\omega_t,\eta_t) x +
f_R(t)]\hat\eta_t(dx)\, dt. 
\end{align*}
To establish existence of Nash equilibrium by applying Fan-Glicksberg
fixed point theorem, it is enough to show that $\Theta$ has closed
graph, which is defined by
$$
\text{Gr}(\Theta) = \{(\omega,\bar\omega,\eta,\bar\eta)\in \mathcal
A(\omega_0)^2 \times \overline{\mathcal A}(\eta_0)^2: (\omega,\eta) \in
\Theta(\bar\omega,\bar\eta)\}
$$
To prove that $\text{Gr}(\Theta)$ is closed it is in turn sufficient to show that
for any two sequences $(\omega^n,\bar\omega^n)_{n\geq 1} \in
\mathcal{A}(\omega_0)^2$, $(\eta^n,\bar \eta^n)_{n\geq 1} \in \bar{\mathcal{A}}(\eta_0)^2$, which converge weakly to $(\omega, \bar{\omega}) \in (\mathcal{A}(\omega_0))^2$ (resp.$(\eta, \bar{\eta}) \in (\bar{\mathcal{A}}(\eta_0))^2$ ) and such that
\begin{multline*}
\int_0^T dt \int_{\Omega} e^{-\rho t} [G(P_t(\omega^n_t,\eta^n_t)-x)
  +f_C(t)] \bar{\omega}^n_t(dx) \nonumber \\ \geq \int_0^T dt \int_{\Omega} e^{-\rho t} [G(P_t(\omega^n_t,\eta^n_t)-x)
  +f_C(t)] \hat{\omega}_t(dx), \,\, {\rm for \,\, all\,\,} \hat{\omega} \in \mathcal{A}(\omega_0)
\end{multline*}
and
\begin{multline*}
 \int_0^T \int_{\overline\Omega} e^{-\rho t}  [-P_t(\omega^n_t,\eta^n_t) x +
f_R(t)]\bar{\eta}^n_t(dx)\, dt \nonumber \\ \geq \int_0^T dt \int_{\overline\Omega} e^{-\rho t}  [-P_t(\omega^n_t,\eta^n_t) x +
f_R(t)]\hat{\eta}_t(dx), \,\, {\rm for \,\, all\,\,} \hat{\eta} \in \bar{\mathcal{A}}(\eta_0),
\end{multline*}
we have 
\begin{multline*}
\int_0^T dt \int_{\Omega} e^{-\rho t} [G(P_t(\omega_t,\eta_t)-x)
  +f_C(t)] \bar{\omega}_t(dx) \nonumber \\ \geq \int_0^T dt \int_{\Omega} e^{-\rho t} [(G(P_t(\omega_t,\eta_t)-x)
  +f_C(t)] \hat{\omega}_t(dx), \,\, {\rm for \,\, all\,\,} \hat{\omega} \in \mathcal{A}(\omega_0)
\end{multline*}
and
\begin{multline*}
 \int_0^T \int_{\overline\Omega} e^{-\rho t}  [-P_t(\omega_t,\eta_t) x +
f_R(t)]\bar{\eta}_t(dx)\, dt \nonumber \\ \geq \int_0^T dt \int_{\overline\Omega} e^{-\rho t}  [-P_t(\omega_t,\eta_t) x +
f_R(t)]\hat{\eta}_t(dx), \,\, {\rm for \,\, all\,\,} \hat{\eta} \in \bar{\mathcal{A}}(\eta_0).
\end{multline*}
To prove this, it is enough to show, that up to taking a subsequence,
\begin{align}
&\lim_n \int_0^T dt \int_{\Omega} e^{-\rho t} [G(P_t(\omega^n_t,\eta^n_t)-x)
  +f_C(t)] \bar{\omega}^n_t(dx) \notag\\ &\qquad  = \int_0^T dt \int_{\Omega} e^{-\rho t} [G(P_t(\omega_t,\eta_t)-x)
  +f_C(t)] \bar{\omega}_t(dx) \label{lim1}\\
&\lim_n  \int_0^T \int_{\overline\Omega} e^{-\rho t}  [-P_t(\omega^n_t,\eta^n_t) x +
f_R(t)]\bar{\eta}^n_t(dx)\, dt \notag\\ &\qquad =  \int_0^T \int_{\overline\Omega} e^{-\rho t}  [-P_t(\omega_t,\eta_t) x +
f_R(t)]\bar{\eta}_t(dx)\, dt. \label{lim2}
\end{align} 
We first focus on \eqref{lim2}, which can be rewritten as follows.
\begin{align*}
&\lim_n  \int_0^T e^{-\rho t} dt \left\{
  -P_t(\omega^n_t,\eta^n_t)\int_{\overline\Omega}  x \bar{\eta}^n_t(dx) +f_R(t)
  \int_{\overline\Omega} \bar{\eta}^n_t(dx)\right\}
\\ &\qquad =  \int_0^T e^{-\rho t} dt \left\{
  -P_t(\omega_t,\eta_t)\int_{\overline\Omega}  x \bar{\eta}_t(dx) +f_R(t)
  \int_{\overline\Omega} \bar{\eta}_t(dx)\right\}.
\end{align*}
Using the same arguments as in the proof of Lemma
\ref{bvariation} (step 1), one can prove that the total variation of
the map $t\mapsto \int_{\overline\Omega}
x \bar{\eta}_t^n(dx)$ on $[0,T]$ is uniformly bounded with respect to
$n$. This implies that one can find a subsequence of this sequence of maps, converging in
$L^1([0,T])$ to some limit, which can be identified, due to weak
convergence of the sequence $\bar{\eta}^n$, with $\int_{\overline\Omega} x
\bar{\eta}_t(dx)$. Furthermore, by Lemma \ref{bvariation}, part ii.,
$P_\cdot(\omega^n_\cdot,\eta^n_\cdot)$ converges in $L^1([0,T])$ to
$P_\cdot(\omega_\cdot, \eta_\cdot)$ (up to taking a
subsequence). Since both factors are bounded, the integral of their
product also converges, and the convergence of the last term in
\eqref{lim2} follows from weak convergence of the measures. 

Let us now turn to \eqref{lim1}. Recall that $G$ is Lipschitz with
constant $1$. Then, 
\begin{align*}
&\Big| \int_0^T dt \int_{\Omega} e^{-\rho t} [G(P_t(\omega^n_t,\eta^n_t)-x)
  +f_C(t)] \bar{\omega}^n_t(dx) \notag\\ &\qquad  - \int_0^T dt \int_{\Omega} e^{-\rho t} [G(P_t(\omega_t,\eta_t)-x)
  +f_C(t)] \bar{\omega}_t(dx) \Big|\\
&\leq   \int_0^T dt e^{-\rho t}
  |(P_t(\omega^n_t,\eta^n_t) - P_t(\omega_t,\eta_t)| \int_{\Omega}\bar{\omega}^n_t(dx)\notag\\ & + \Big| \int_0^T dt \int_{\Omega} e^{-\rho t} [G(P_t(\omega_t,\eta_t)-x)
  +f_C(t)] (\bar{\omega}^n_t - \bar{\omega}_t(dx)) \Big|.
\end{align*}
As before, we can show that, up to taking a subsequence 
\begin{align}
\int_{\Omega}\bar{\omega}^n_\cdot(dx)\xrightarrow{L^1([0,T])} \int_{\Omega}\bar{\omega}_\cdot(dx),\label{convintom}
\end{align}
and thus the first term above converges to $0$ since $P_t$ is bounded. For the second term,
we consider a sequence of bounded continuous functions $P^m_t$, approximating
the price $P_t(\omega_t,\eta_t)$ in $L^1([0,T])$. Using once again the
Lipschitz property of $G$ and the fact that this function is increasing, 
\begin{align*}
&\Big| \int_0^T dt \int_{\Omega} e^{-\rho t} [G(P_t(\omega_t,\eta_t)-x)
  +f_C(t)] (\bar{\omega}^n_t - \bar{\omega}_t(dx)) \Big|\\
&\leq \Big| \int_0^T dt \int_{\Omega} e^{-\rho t} [G(P_t^m-x)
  +f_C(t)] (\bar{\omega}^n_t - \bar{\omega}_t(dx)) \Big|\\
  & + \int_0^T dt \, e^{-\rho t} |(P_t(\omega_t,\eta_t) -P^m_t| \left|\int_{\Omega} (\bar{\omega}^n_t - \bar{\omega}_t(dx))\right|.
\end{align*} 
The first term above converges to zero in view of the weak convergence
of measures, and for the second term we can once again use
\eqref{convintom} and dominated convergence. 
\end{proof}
\paragraph{Uniqueness of the equilibrium price process}
We will show that different Nash equilibria necessarily correspond to
the same price, except possibly on a set of measure
zero. 
\begin{proposition}
Let $(\omega^1,\eta^1)$ and
$(\omega^2,\eta^2)$ be two Nash equilibria. Then, the set of points
$t$ such that $P_t(\omega^1_t,\eta^1_t) \neq P_t(\omega^2_t,\eta^2_t)$ has
Lebesgue measure zero. 
\end{proposition}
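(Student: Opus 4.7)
My plan is to combine the Nash-equilibrium conditions of both candidate equilibria with a variational reformulation of the price-formation rule. The aim is to establish a pointwise upper bound $A(t)\leq -c(P^1_t-P^2_t)^2$ for a quantity $A(t)$ that, after summing the Nash inequalities, is known to satisfy $\int_0^T e^{-\rho t}A(t)dt\geq 0$; these two facts together force $P^1=P^2$ a.e.

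First I sum the four Nash inequalities obtained by taking $\omega^2$ as the deviation at $(\omega^1,\eta^1)$ and $\omega^1$ as the deviation at $(\omega^2,\eta^2)$, and analogously for the renewable side with $\eta^1,\eta^2$. The $f_C$ and $f_R$ terms cancel and the conventional contribution reads
\[
\int_0^T\!\!\int_\Omega e^{-\rho t}\bigl[G(P^1_t-x)-G(P^2_t-x)\bigr](\omega^1_t-\omega^2_t)(dx)\,dt \geq 0.
\]
Using $G(a)-G(b)=\int_b^a F(z)dz$, Fubini, and the aggregate conventional supply $S(p,t,i):=\int_\Omega F(p-x)\omega^i_t(dx)+F_0(p)$, this becomes $\int_0^T e^{-\rho t}\int_{P^2_t}^{P^1_t}[S(p,t,1)-S(p,t,2)]dp\,dt\geq 0$. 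The analogous computation for renewables, combined with $\int_{\overline\Omega}x(\eta^1_t-\eta^2_t)(dx)=R^2_t-R^1_t$, yields $\int_0^T e^{-\rho t}(P^2_t-P^1_t)(R^2_t-R^1_t)dt\geq 0$. Adding,
\[
\int_0^T e^{-\rho t}A(t)\,dt\geq 0,\quad A(t):=\int_{P^2_t}^{P^1_t}\![S(p,t,1)-S(p,t,2)]dp+(P^2_t-P^1_t)(R^2_t-R^1_t).
\]

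The core step is the matching pointwise bound. For each $t$ I introduce the scalar potential $\phi_i(P):=\int_0^P[S(q,t,i)-D^i_t]dq$ with $D^i_t:=(\overline D_t-R^i_t)^+$. The hypothesis $F_0(p+h)-F_0(p)\geq ch$ and the monotonicity of $F$ imply $S(p+h,t,i)-S(p,t,i)\geq ch$, so $\phi_i$ is $c$-strongly convex in $P$. Since $S(\cdot,t,i)$ is continuous and strictly increasing, the definition $P^i_t=\inf\{P:D^i_t\leq S(P,t,i)\}\wedge\overline P$ identifies $P^i_t$ as the constrained minimizer of $\phi_i$ on $[0,\overline P]$. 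Strong convexity then gives $\phi_1(P^2_t)-\phi_1(P^1_t)\geq\frac{c}{2}(P^1_t-P^2_t)^2$ and symmetrically for $\phi_2$; summing and computing $\phi_1(P^1)+\phi_2(P^2)-\phi_1(P^2)-\phi_2(P^1)$ explicitly yields
\[
\int_{P^2_t}^{P^1_t}[S(q,t,1)-S(q,t,2)]dq-(P^1_t-P^2_t)(D^1_t-D^2_t)\leq -c(P^1_t-P^2_t)^2.
\]
Inserting this into the definition of $A(t)$ gives $A(t)\leq -c(P^1_t-P^2_t)^2+(P^1_t-P^2_t)\bigl[(D^1_t+R^1_t)-(D^2_t+R^2_t)\bigr]$. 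Since $D^i_t+R^i_t=\max(\overline D_t,R^i_t)$, a short case analysis shows the correction term is nonpositive: if both $R^i_t\leq\overline D_t$ the bracket vanishes; if $R^i_t>\overline D_t$ then $D^i_t=0$ forces $P^i_t=0$, making the two factors of the correction have opposite signs. Hence $A(t)\leq -c(P^1_t-P^2_t)^2$ for every $t$, and combined with $\int_0^T e^{-\rho t}A(t)dt\geq 0$ we deduce $P^1_t=P^2_t$ for a.e.\ $t\in[0,T]$.

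The main obstacle I foresee is the price cap $\overline P$ together with the positive-part in the residual demand, which prevent a naive market-clearing equality $S(P^i,i)=D^i$ from holding pointwise. Replacing this algebraic identity by the variational characterization of $P^i_t$ as the minimizer of the $c$-strongly convex potential $\phi_i$ over $[0,\overline P]$ is the key device, as it produces a single inequality that is uniformly valid across the interior, boundary, and zero-residual-demand regimes.
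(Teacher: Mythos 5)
Your proof is correct, and its overall architecture coincides with the paper's: sum the Nash inequalities for the two equilibria so that the $f_C,f_R$ terms cancel, establish a pointwise-in-$t$ strict antimonotonicity bound of the form $A(t)\leq -c\,(P^1_t-P^2_t)^2$, and conclude by contradiction with the integrated inequality $\int_0^T e^{-\rho t}A(t)\,dt\geq 0$. Where you differ is in how the key pointwise bound is obtained. The paper uses the two-sided mean-value estimate $(P_1-P_2)F(P_2-x)\leq G(P_1-x)-G(P_2-x)\leq (P_1-P_2)F(P_1-x)$, the market-clearing (in)equalities $\int_\Omega F(P^i_t-x)\,\omega^i_t(dx)\lessgtr(\overline D_t-R^i_t)^+-F_0(P^i_t)$, and a case split on the sign of $P^1_t-P^2_t$; the residual terms are absorbed using the identity $(a)^++a^-=a^++(a)^-$ and the coercivity of $F_0$. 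You instead use the exact identity $G(a)-G(b)=\int_b^a F(z)\,dz$ plus Fubini to express the conventional term as $\int_{P^2_t}^{P^1_t}[S(p,t,1)-S(p,t,2)]\,dp$, and then characterize each $P^i_t$ variationally as the minimizer over $[0,\overline P]$ of the $c$-strongly convex potential $\phi_i$, which delivers the quadratic gap in a single inequality uniformly over the interior, price-cap, and zero-residual-demand regimes. Your route buys a cleaner, case-free treatment of the cap and of the positive part in the residual demand (the paper handles these through the inequality version of market clearing and the observation $\overline D_t-R^1_t>0$ when $P^1_t>P^2_t$), at the cost of introducing the auxiliary potentials; both arguments rest on the same implicit regularity of $F_0$ (continuity, so that market clearing holds with equality below the cap), and your final case analysis showing $(P^1_t-P^2_t)\bigl[\max(\overline D_t,R^1_t)-\max(\overline D_t,R^2_t)\bigr]\leq 0$ is the exact counterpart of the paper's absorption of the $(\overline D_t-R^i_t)^-$ terms.
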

\begin{proof}
Let 
$$
f(t,\omega,\eta,x) = G(P_t(\omega,\eta)-x) +f_C(t)\quad
\text{and}\quad \bar f(t,\omega,\eta,x) = -P_t(\omega,\eta) x +f_R(t). 
$$
By definition of the Nash equilibrium, 
\begin{multline*}
\int_0^T \int_{\Omega} e^{-\rho t}
  (f(t,\omega^1_t,\eta^1_t,x)-f(t,\omega^2_t,\eta^2_t,x)) (\omega^1_t(dx) -
  \omega^2_t(dx)) dt \\
+  \int_0^T \int_{\overline\Omega} e^{-\rho t} (\bar
f(t,\omega^1_t,\eta^1_t,x)-\bar f(t,\omega^2_t,\eta^2_t,x)) (\eta^1_t(dx) -
  \eta^2_t(dx))dt \geq 0.
\end{multline*}
Choose $t$ such that $P_t(\omega^1_t,\eta^1_t)> P_t(\omega^2_t,\eta^2_t)$
From the fact that $F$ is increasing and the mean value theorem, we
deduce the following simple estimate:
\begin{align*}
(P_1-P_2) F(P_2-x)\leq G(P_1-x) - G(P_2-x)\leq (P_1-P_2)F(P_1-x),\quad
  P_1\geq P_2.
\end{align*}
Moreover, the definition
of the price implies that 
\begin{align*}
&\int_\Omega
  F(P_t(\omega^1_t,\eta^1_t) - x) \omega^1_t(dx)\leq  (\overline D_t -
R^1_t)^+  - F_0(P_t(\omega^1_t,\eta^1_t))\\
&\int_\Omega
  F(P_t(\omega^2_t,\eta^2_t) - x) \omega^2_t(dx)  = (\overline D_t -
R^2_t)^+  - F_0(P_t(\omega^2_t,\eta^2_t))\\
&\overline D_t - R^1_t>0,
\end{align*} 
where
$$
R^i_t = \int_{\overline \Omega} x(\bar \eta_t (x) - \eta^i_t(x)) dx.
$$ 
Therefore,
\begin{align*}
&\int_{\Omega} 
  (G(P_t(\omega^1_t,\eta^1_t)-x)-G(P_t(\omega^2_t,\eta^2_t)-x)) (\omega^1_t(dx) -
  \omega^2_t(dx))\\ &\leq  (P_t(\omega^1_t, \eta^1_t) -
  P_t(\omega^2_t,\eta^2_t)) \\ &\qquad \qquad \times \left\{\int_\Omega
  F(P_t(\omega^1_t,\eta^1_t) - x) \omega^1_t(dx) - \int_\Omega
  F(P_t(\omega^2_t,\eta^2_t) - x) \omega^2_t(dx)\right\}\\
&\leq  (P_t(\omega^1_t, \eta^1_t) -
  P_t(\omega^2_t,\eta^2_t))\\&\qquad \qquad \times\left\{(\overline D_t -
R^1_t)^+  - F_0(P_t(\omega^1_t,\eta^1_t))- (\overline D_t -
R^2_t)^+  + F_0(P_t(\omega^2_t,\eta^2_t))\right\}
\end{align*}
and we obtain the antimonotonicity property
\begin{align*}
&\int_{\Omega} 
  (f(t,\omega^1_t,\eta^1_t,x)-f(t,\omega^2_t,\eta^2_t,x)) (\omega^1_t(dx) -
  \omega^2_t(dx)) \\
&\qquad +   \int_{\overline\Omega} (\bar
f(t,\omega^1_t,\eta^1_t,x)-\bar f(t,\omega^2_t,\eta^2_t,x)) (\eta^1_t(dx) -
  \eta^2_t(dx)) \\
& \leq \int_{\Omega} 
  ((P_t(\omega^1_t,\eta^1_t)-x)F(P_t(\omega^1_t,\eta^1_t)-x)-(P_t(\omega^2_t,\eta^2_t)-x)
                     F(P_t(\omega^2_t,\eta^2_t)-x)) \\ &\qquad \qquad \times(\omega^1_t(dx) -
  \omega^2_t(dx)) +  (P_t(\omega^2_t,\eta^2_t) -P_t(\omega^1_t,\eta^1_t))(R^1_t -
  R^2_t) \\
&\leq\left\{(\overline D_t -
R^1_t)^+ + R^1_t  - F_0(P_t(\omega^1_t,\eta^1_t))- (\overline D_t -
R^2_t)^+ -R^2_t + F_0(P_t(\omega^2_t,\eta^2_t))\right\}\\ &\qquad
                                                            \qquad
                                                            \times  (P_t(\omega^1_t, \eta^1_t) -
  P_t(\omega^2_t,\eta^2_t))\\
& = (P_t(\omega^1_t, \eta^1_t) -
  P_t(\omega^2_t,\eta^2_t))\left\{  - F_0(P_t(\omega^1_t,\eta^1_t))- (\overline D_t -
R^2_t)^- + F_0(P_t(\omega^2_t,\eta^2_t))\right\}\\
&\leq -c (P_t(\omega^1_t, \eta^1_t) -
  P_t(\omega^2_t,\eta^2_t))^2
\end{align*}
The case where $P_t(\omega^1_t,\eta^1_t)<
P_t(\omega^2_t,\eta^2_t)$ is dealt with by a symmetric
argument. Finally, when $P_t(\omega^1_t,\eta^1_t)=
P_t(\omega^2_t,\eta^2_t)$, the left-hand side of the above equality is
clearly zero. Thus,
\begin{multline*}
\int_0^T \int_{\Omega} e^{-\rho t}
  (f(t,\omega^1_t,\eta^1_t,x)-f(t,\omega^2_t,\eta^2_t,x)) (\omega^1_t(dx) -
  \omega^2_t(dx)) dt \\
+  \int_0^T \int_{\overline\Omega} e^{-\rho t} (\bar
f(t,\omega^1_t,\eta^1_t,x)-\bar f(t,\omega^2_t,\eta^2_t,x)) (\eta^1_t(dx) -
  \eta^2_t(dx))dt \\\leq -c \int_0^T e^{-\rho t} (P_t(\omega^1_t, \eta^1_t) -
  P_t(\omega^2_t,\eta^2_t))^2 dt,
\end{multline*}
which is in contradiction with the Nash equilibrium property unless $P_t(\omega^1_t, \eta^1_t) =
  P_t(\omega^2_t,\eta^2_t)$ almost everywhere on $[0,T]$. 
\end{proof}



\section{Numerical computation of the equilibrium measures}
\label{numerics.sec}
\paragraph{Computing the MFG equilibrium}
In the numerical algorithm, we successively take a step of descreasing
size towards the best response, until a desired convergence criterion
is met. Recall that the Nash equilibrium of the relaxed mean-field
game is described in Definition \ref{defnash}. The computation of
$\omega^*$ and $\eta^*$ is achieved using the following procedure. 
\begin{itemize}
\item Choose initial values $\omega^{(0)} \in \mathcal A(\omega_0)$
  and ${\eta^{(0)} \in \overline{\mathcal A}(\eta_0)}$
\item For $i=1\dots N_{iter}$
\begin{itemize}
\item Compute the best responses
\begin{align*}
\tilde \omega^{(i)} &= {\arg\max_{\omega\in \mathcal A(\omega_0)}} \int_0^T
\int_\Omega e^{-\rho t} \left[G(P_t(\omega^{(i-1)}, \eta^{(i-1)})-x) +
  f_C(t)\right]\omega_t(dx)\, dt,\\
\tilde \eta^{(i)} &= {\arg\max_{\eta\in\overline{ \mathcal A}(\eta_0)}} \int_0^T
\int_{\overline\Omega} e^{-\rho t} \left[-P_t(\omega^{(i-1)}, \eta^{(i-1)})x +
  f_R(t)\right]\eta_t(dx)\, dt.
\end{align*}
\item Choose the step size parameter: $w^{(i)} = \frac{1}{i}$. 
\item Update the measures:
\begin{align*}
\omega^{(i)}& = (1-w^{(i)}) \omega^{(i-1)} + w^{(i)}
  \tilde\omega^{(i)},\\
\eta^{(i)} &= (1-w^{(i)}) \eta^{(i-1)} + w^{(i)}
  \tilde\eta^{(i)}.    
\end{align*}
\end{itemize}
\end{itemize}
The number of steps of the algorithm may be fixed or chosen based on a
convergence criterion. In our implementation, we monitor the relative
improvement of the best response, defined by 
\begin{align*}
\mathcal E^C(\omega^{(n)},\eta^{(n)}) &= {\max_{\omega\in \mathcal A(\omega_0)}} \int_0^T
\int_\Omega e^{-\rho t} \left[G(P_t(\omega^{(n)}, \eta^{(n)})-x) +
  f_C(t)\right](\omega_t-\omega^{(n)}_t)(dx)\, dt,\\
\mathcal E^R(\omega^{(n)},\eta^{(n)}) &= {\max_{\eta\in\overline{ \mathcal A}(\eta_0)}} \int_0^T
\int_{\overline\Omega} e^{-\rho t} \left[-P_t(\omega^{(n)}, \eta^{(n)})x +
  f_R(t)\right](\eta_t-\eta_t^{(n)})(dx)\, dt.
\end{align*}
{Clearly, $\mathcal E^C(\omega,\eta)\geq 0$ and $\mathcal
E^R(\omega,\eta)\geq 0$ for all $\omega \in \mathcal A(\omega_0)$ and $\eta\in
\overline{\mathcal A}(\eta_0)$, and the situation when $\mathcal E^C(\omega,\eta)=\mathcal
E^R(\omega,\eta)=0$ corresponds to the Nash equilibrium.} In general,
$\mathcal E^C(\omega,\eta)$ corresponds to the increase of gain of
all conventional producers if they move from the current value
$\omega$, to their best response, supposing that the distribution of
renewable producers remains unchanged, and similarly for the renewable
producers.

We do not prove the convergence of the algorithm, however,
in the illustrations presented in the next section we observe
convergence of $\mathcal E^C(\omega^{(n)},\eta^{(n)})$ and $\mathcal
E^R(\omega^{(n)},\eta^{(n)})$ to zero at the rate $\frac{1}{n}$, thus
the algorithm produces an $\varepsilon$-Nash equilibrium in
$O(\varepsilon^{-1})$ steps.


\paragraph{Computing the best response}
To compute the best response numerically, we discretize the
Fokker-Planck inequalities in the definition of the sets $\mathcal A$
and $\overline{\mathcal A}$, in time and in space. The process $S$
(renewable output) is discretized on the interval $(S_{min},S_{max})$
with $S_{min}=0$ and $S_{max}=1$, using a uniform grid with $N_s+1$
points. The process $C$ (conventional cost) is similarly discretized
on the interval $(C_{min},C_{max})$ with $C_{min}=0$ and $C_{max}$
chosen depending on the model parameters, using a uniform grid with
$N_{C}+1$ points. Finally, the time interval is also discretized using a
uniform grid with $N_T+1$ points. We define $\Delta S=
\frac{S_{max}-S_{min}}{N_S}$, $S_j  = S_{min} + j \Delta S$ for
$j=0,\dots,N_S$, and similarly for the other variables. 
{Every measure $\omega\in \mathcal A(\omega_0)$}
satisfies, in the sense of distributions, the Fokker-Planck inequality
$$
-\frac{\partial \omega_t}{\partial t} -k\frac{\partial}{\partial x}
((\theta-x)\omega_t) + \frac{\delta^2}{2}\frac{\partial^2}{\partial x^2} (x\omega_t) \geq 0.
$$
This inequality is discretized using the implicit scheme, leading to
the following system of inequailties:
\begin{align*}
\omega^j_i \geq \left(1+ \frac{2\sigma^2_j \Delta t}{\Delta C^2}\right) \omega^j_{i+1}-
  {\Delta t}\left(\frac{\sigma_{j+1}^2}{\Delta C^2}-
  \frac{\mu_{j+1}}{2\Delta C}\right)\omega^{j+1}_{i+1} - {\Delta t}\left(\frac{\sigma_{j-1}^2}{\Delta C^2}+
  \frac{\mu_{j-1}}{2\Delta C}\right)\omega^{j-1}_{i+1} 
\end{align*}
for $0\leq j \leq N_C$ and $i=0,\dots,N_{T-1}$, where for $j=0$ and $J=N_C$, these formulas
are interpreted by assuming that $\omega^{-1}_i = \omega^{N_C}_i=0$
for $i=0,\dots,N_T$. 
 In the above formula, $\omega^j_i$ denotes the
discretized density at the point $(T_i,C_j)$, $\mu_j = k(\theta-C_j)$
and $\sigma^2_j = \frac{\delta^2 C_j }{2}$. The inequalities for
$\eta$ are discretized similarly. The gain functional is similarly
approximated by the discrete sum: for example, for the conventional
producers we have, 
$$
\sum_{i=0}^{N_T} \sum_{j=0}^{N_C} e^{-\rho T_i} [G(P_{T_i}(\omega_i,
\eta_i) - C_j) + f_C(T_i)]\omega^j_i. 
$$
The best response is then computed by maximizing this functional under
the inequality constraints given above and the positivity constraints,
using an interior point method for linear programming. 
\section{Illustration}
\label{illustration.sec}
The goal of this section is to provide a toy
example inspired by the British electricity sector to illustrate our model, rather than use it to obtain
realistic projections, which will be the topic of future research. Table \ref{ukgen.tab} shows
the UK installed generation capacity in 2017. The main energy sources are gas, nuclear
and intermittent renewables, which is 80\% wind. We therefore consider an economy
consisting of only these three sources of electricity. Our
model takes into account the departures of conventional producers and
entry of new renewable projects into the market. 
 The nuclear energy and the pumped storage generation is accounted for
 as baseline supply. The maximum baseline supply is therefore taken to
 be equal to 12.1 GW. We make an ad hoc assumption that the baseline supply increases linearly as function of price, from zero at zero price, to 12.1 at the maximal price value.

\begin{table}
\noindent\begin{tabular}{lllll}\hline
Conventional steam & CCGT &Nuclear & Pumped storage &Wind \& Solar\\ 
18.0 &  32.9 &  9.4 &   2.7  &    40.6 \\\hline 
\end{tabular}

\caption{UK Electricity installed generation capacity in 2017,
  GW. Conventional steam includes coal and gas. CCGT stands for
  combined cycle gas turbine. Wind and solar is
  approximately 20\% solar and 80\% wind, out of which there is about
  60\% onshore and 40\% offshore. Source: UK Energy in Brief 2018} 
\label{ukgen.tab}

\end{table}

The total installed gas capacity is taken to be equal to 35.9 GW.  The
total installed renewable capacity is taken to be equal to $35.6$ GW. 
To estimate the potential additional renewable generation capacity, we
  use the UK government projections\footnote{\texttt{\tiny https://www.gov.uk/government/publications/updated-energy-and-emissions-projections-2018}}. Under
  the high fossil fuel price scenario, the additional renewable
  generation capacity by 2035 is estimated at 55 GW, and under the
  reference scenario at 42 GW. We use the value $47$ GW  in
  the examples below. 

\paragraph{Capital costs, discount rate and depreciation rate for renewable
    producers.} The report \cite{thomson2015life} summarizes 17 studies
  of capital costs of onshore wind power plants. Taking the average of these 17
  values, we obtain a mean capital cost of 1377 GBP per kilowatt of
  energy (in 2011 GBP) and a mean discount rate of $8.6\%$. The
  annual operational and maintenance cost are estimated in this report
  to be between $1\%$ and $1.5\%$ of the capital costs, and we use the
  value of $1.25\%$ in the simulation. Finally, the lifetime of the
  wind power plants is taken to be equal to 20 years, and we take
  $\gamma_R = \frac{\log 2}{10}$ meaning that the plant looses $50\%$
  of its value over 10 years. 

\paragraph{Capital costs and depreciation rate for conventional
  producers}
We assume that the fixed running cost of the conventional power plants
is $k_C = 30$ GBP per MW of capacity per year (see
\cite{leigh2016electricity}). Upon exiting the market, the
conventional producer is assumed to lose the value of the plant in its
entirety ($K_C = 0$). 


\paragraph{Initial distribution and dynamics of the capacity factors}
To find a plausible initial distribution of renewable capacity factors, we
  computed the capacity factors of 20 largest UK onshore wind plants
  for the year 2018. The list of power plant locations was downloaded
  at\\
  \texttt{\footnotesize https://en.wikipedia.org/wiki/List\_of\_onshore\_wind\_farms\_in\_the\_United\_Kingdom},
  and the capacity factors were computed using the software at\\
  \texttt{https://www.renewables.ninja/}, which uses MERRA-2
  reanalysis dataset. The mean of the 20 values is $42.61\%$ and the
  standard deviation is $4.43\%$. We therefore calibrate the
  mean and variance of the stationary distribution of the capacity
  factor process to these values.

{The stationary distribution of the Jacobi process (see e.g. \cite{gourieroux}),}
also known as Wright-Fisher diffusion \eqref{jacobi}
(see \cite[Chapter 10]{ethier2009markov})
is the two-parameter beta distribution given by
$$
p(x) = \frac{ x^{\frac{2\bar k\bar\theta}{\bar\delta^2}-1}
  (1-x)^{\frac{2\bar k(1-\bar\theta)}{\bar\delta^2}-1}}{B\left(\frac{2\bar k\bar\theta}{\bar\delta^2},
  \frac{2\bar k(1-\bar\theta)}{\bar\delta^2}\right)},
$$
where $B$ is the beta function. The mean and variance of this
invariant distribution are $\bar \theta$ and $\frac{\bar\theta(1-\bar\theta)\bar
  \delta^2}{2\bar k+\bar\delta^2}$, respectively. To calibrate the
  parameters $(\bar k, \bar \theta, \bar \delta)$, we need a third
  constraint, which cannot be obtained from the stationary
  distribution. To this end, we fixed the parameter $\bar k$ in an ad
  hoc manner to $\bar k = 0.5$. 

\paragraph{Initial distribution and dynamics of the cost processes of
  gas-fired power plants}
It is difficult to quantify the full runinng cost of a gas-fired
power plant. To have an idea of the distribution of such costs, we study the aggregate
offer curves from the spot electricity market. For reasons of data
availability, we use the curves from the French electricity markets,
assuming that the costs are roughly the same in France and in the UK,
after accounting for exchange rate. The typical aggregate offer curve
in the French spot market, truncated to the price interval from -50
EUR to 100 EUR, is shown in Figure \ref{agrecurve.fig}, left graph. We see that
this curve can be split into different almost linear segments,
corresponding to different fuel types. {We fit the aggregate offer
curve using a piecewise-constant
function with four breakpoints} (shown as a thick solid line
in Figure \ref{agrecurve.fig}, left graph) and identify the longest
linear segment as corresponding to offers by gas-fired power
plants. This is motivated by the fact that gas has the largest share
of flexible generation (excluding nuclear) in France. This gives a
distribution of bids for a specific hour, which is then averaged over
24 hours to obtain the daily distribution. This analysis is performed
on each day for the period from January 1st, 2016 to October 5th,
2017. This gives us a cost distribution for each day of this reference
period.  Figure \ref{agrecurve.fig}, right graph, shows the evolution
of the mean and standard deviation of this distribution, together
with the evolution of the fuel price (spot gas price for trading
region France), converted to electricity price using an efficiency
factor of 44\%. The average values of the mean and standard deviation
are, respectively, $33.4$ and $11.0$ (after conversion to GBP). We
therefore calibrate the mean and standard deviation of the stationary
distribution of the cost factor
process to these values. 

{The stationary distribution of the CIR process \eqref{cir} (see e.g. \cite{kebab}) is the
two-parameter gamma distribution.}
$$
p(x) = \frac{\left(\frac{\delta^2}{2k}\right)^{-\frac{2k\theta}{\delta^2}}}{\Gamma\left(\frac{2k\theta}{\delta^2}\right)} x^{\frac{2k}{\delta^2}\theta-1} e^{-\frac{2k}{\delta^2} x}
$$
The mean and variance of this
invariant distribution are $\theta$ and $\frac{\theta
  \delta^2}{2 k}$, respectively. As before, to calibrate the
  parameters $( k, \theta, \delta)$, we need a third
  constraint, which cannot be obtained from the stationary
  distribution and we fixed the parameter $ k$ in an ad
  hoc manner to $ k = 0.5$.

\begin{figure}
\centerline{\includegraphics[width=0.5\textwidth]{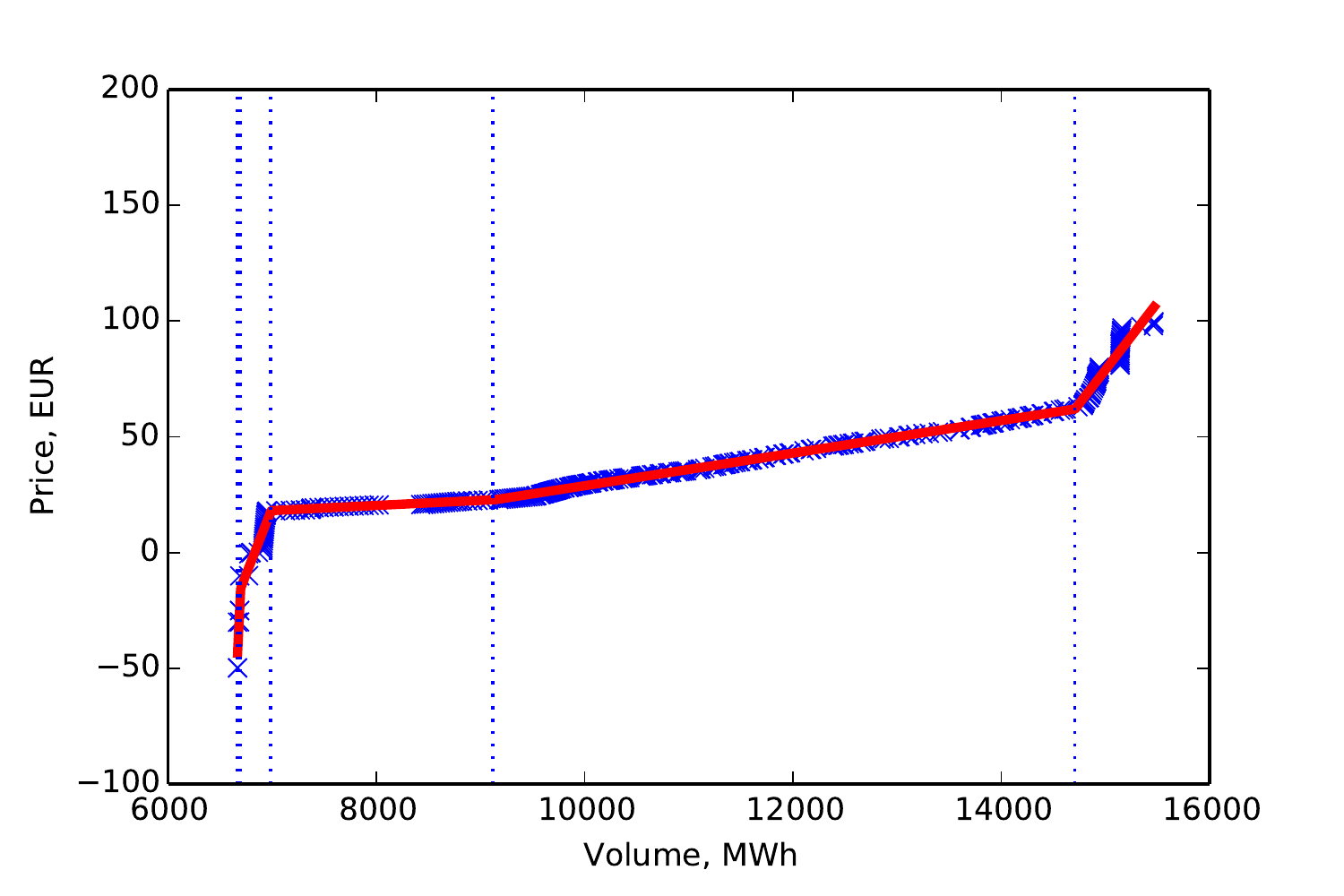}\includegraphics[width=0.5\textwidth]{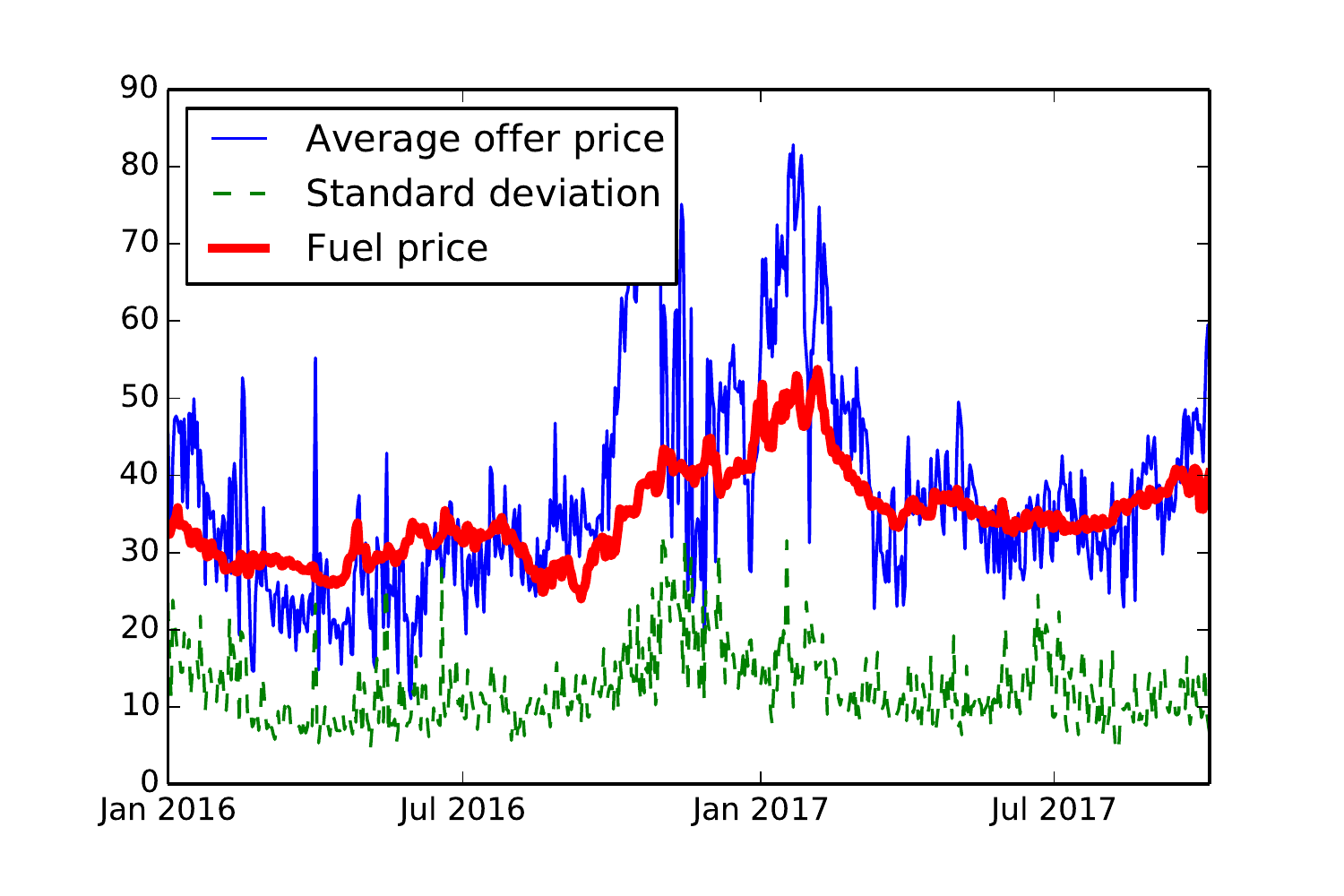}}
\caption{Left: typical offer curve in the French electricity
  market. Right: evolution of the daily mean and standard deviation of
the offers by gas-fired power plants in the French electricity market.}
\label{agrecurve.fig}
\end{figure}

\paragraph{Bidding function of gas-fired power plants} Since it is
  difficult to reconstruct the actual bidding function used by market
  participants, we use an ad hoc bidding function, satisfying our
  assumptions, which takes the following form:
$$
F(x) = \mathbf 1_{x>\epsilon}+\frac{1}{2}(1+\sin(-\pi/2+\pi
x/\epsilon))\mathbf 1_{0\leq x \leq \epsilon}, 
$$
where $\epsilon$ is a parameter which we choose equal to $0.5$. This
means that the conventional generators bid their full capacity as soon
as the price is greater than the cost plus $0.5$ GBP. The actual
choice of $\epsilon$ does not have a significant effect on the
results. 
\paragraph{Electricity demand projections.} The reference electricity consumption evolution scenario
  is taken from the British government projections\footnote{\texttt{\tiny
    https://www.gov.uk/government/publications/updated-energy-and-emissions-projections-2018},
annex F}. This reference provides a forecast of average annual
electricity consumption up to 2035. In our model, the time step was
fixed to 3 months and a distinction between peak (7AM-8PM, Mon-Fri)
and off-peak demand has been made. To this end, we have used the high
frequency electricity consumption data from
\texttt{gridwatch.co.uk} to estimate the historical annual cycle and the
historical peak/off-peak ratio. This enabled us to construct the
projections of peak and off-peak consumption with the time step of 3
months. These are shown in Figure \ref{demandproj.fig}.

\paragraph{Distinction between peak and off-peak prices}
For realistic modeling of electricity markets, it is essential to
distinguish between peakload and baseload (off-peak) prices. In particular, the
effect of renewable penetration on these two prices may be
different. To this end, the simulations were carried out in a slightly
modified version of the model, where peak and off-peak prices are
computed separately, by matching the corresponding demand projection
with the supply curve of the market, and the revenues of the agents
are computed by adding up their revenues over peak and off-peak
periods. All the theoretical developments carry over to this case, but
we chose to present the single-price case in the paper to lighten the
notation.

\begin{figure}
\centerline{\includegraphics[width=0.8\textwidth]{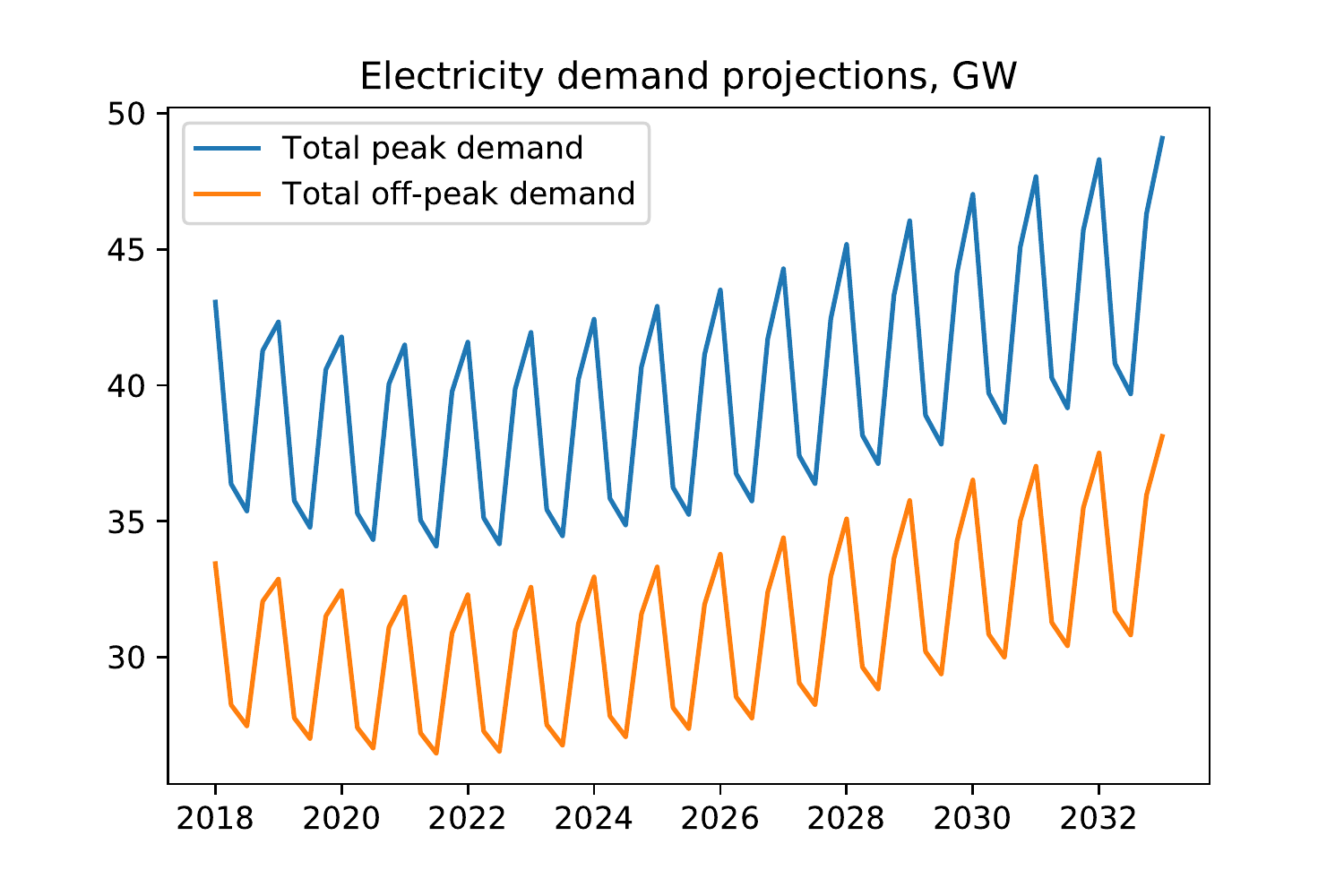}}
\caption{Electricity demand projections used in the simulation.}
\label{demandproj.fig}
\end{figure}

\paragraph{Simulation results}
The first simulation illustrates the baseline case (with the parameters
and modeling choices described above). We perform 300 iterations of
the algorithm as described in Section \ref{numerics.sec}, and monitor the gain
increase from switching to the best response for renewable and
conventional producers $\mathcal E^C(\omega^{(n)},\eta^{(n)})$ and $\mathcal E^R(\omega^{(n)},\eta^{(n)})$. Figure \ref{conv.fig} shows the gain increase
for the renewable and conventional producers in the baseline
simulation: it can be seen that this
quantity converges to zero at the rate $\frac{1}{n}$ where $n$ is the
number of iterations. The final values for the baseline simulations
correspond to a gain increase of about $0.001$ GBP per MW of installed
capacity per hour for the conventional producers and $0.005$ GBP per MW of installed
capacity per hour for the renewable producers, which is quite small compared
to the price at which electricity is usually sold. Similar convergence
rates and similar or smaller final values have been observed in other
simulations described below.

\begin{figure}
\centerline{\includegraphics[width=0.8\textwidth]{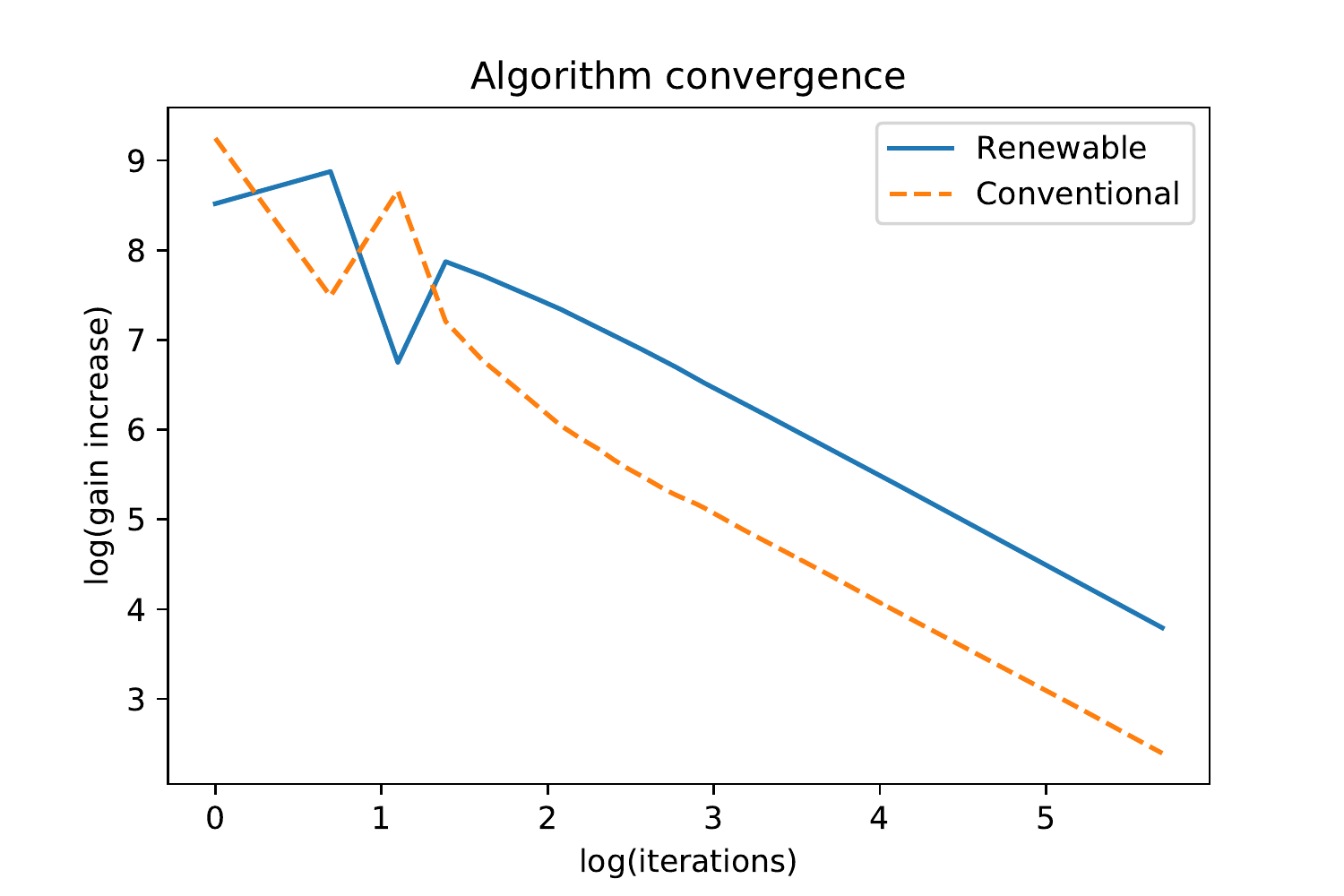}}
\caption{Convergence of the gain increase upon switching to the best
  response. The gain is given in million GBP for the entire sector.}
\label{conv.fig}
\end{figure}

In the second simulation, which we term Scenario 1, we assume that there is a 27\% subsidy
for renewable energy, bringing down the fixed cost of building a wind
power plant to $1000$ GBP per kW of installed capacity. The other
parameters are same as above.

In the third simulation, termed Scenario 2, we assume that the
conventional producers receive a fixed payment of 10 GBP per KW of
installed capacity per year. For comparison, the clearing prices at
the UK T-4 capacity auctions were $19.4$ GBP/kW for 2018--2019,  $18$
GBP/kW for 2019--2020, $22.5$ GBP/kW for 2020--2021 and  $8.4$ GBP/kW
for 2021-2022, see \cite{capacityreport}.  The 27\% renewable subsidy is still in
place. 

Figure \ref{convren.fig} shows the evolution of the conventional and
renewable installed capacity in the three simulations. The
corresponding price trajectories are shown in Figure
\ref{price.fig}. While in the baseline scenario, no new renewable
capacity is installed, the 27\% renewable subsidy (scenario 1) dramatically
increases renewable installation, which practically doubles over the
15-year period. Notice that while the conventional capacity is reduced
in the beginning of the 15-year period, the arrival of the renewable
capacity is more gradual. This happens because of the form of the
demand process (Figure \ref{demandproj.fig}) which grows in the second
half of the 15-year period. 

In terms of electricity prices, the main effect of the
renewable subsidy is a reduction of the baseload price, especially
during the winter months. On the other hand, the peakload electricity
price in winter months increases compared to the baseline scenario. Thus, the renewable subsidy
 allows to decarbonize the electricity production, but may lead to
 higher peakload prices. This happens in our model because the
 conventional producers are strongly incentivised to leave the
 market. In scenario 2 this incentive is reduced (in our case this is
 achieved with capacity payments). As a result,
 the renewable penetration is slightly reduced, but the peakload price
 is considerably lower than in scenario 1, and also lower
 than in the baseline scenario. The baseload price, on the other hand,
 is much lower than in the baseline scenario, slightly higher than in
 scenario 1 in the summer months and similar to scenario 1 in the
 winter months. 

\begin{figure}
\centerline{\includegraphics[width=0.5\textwidth]{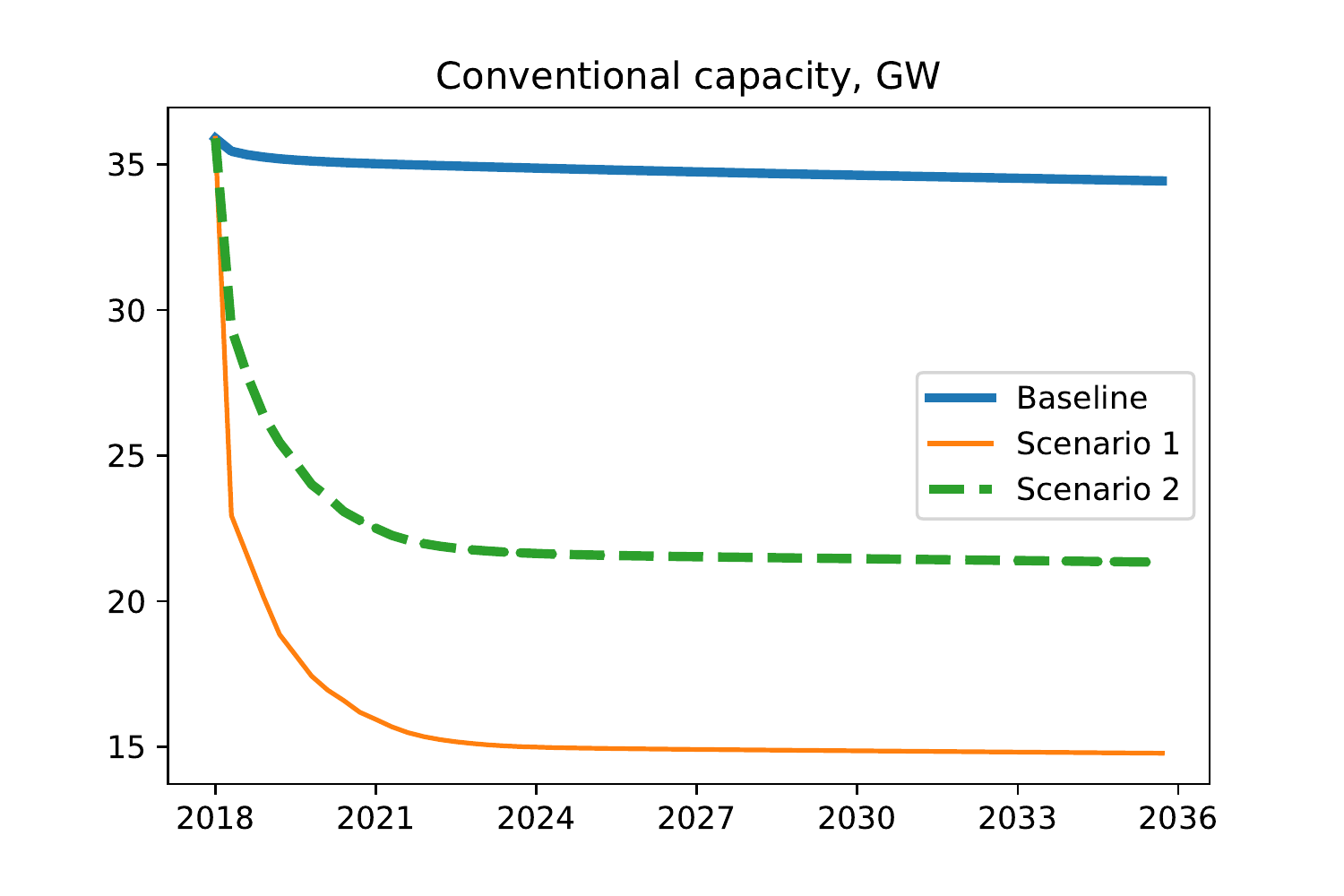}\includegraphics[width=0.5\textwidth]{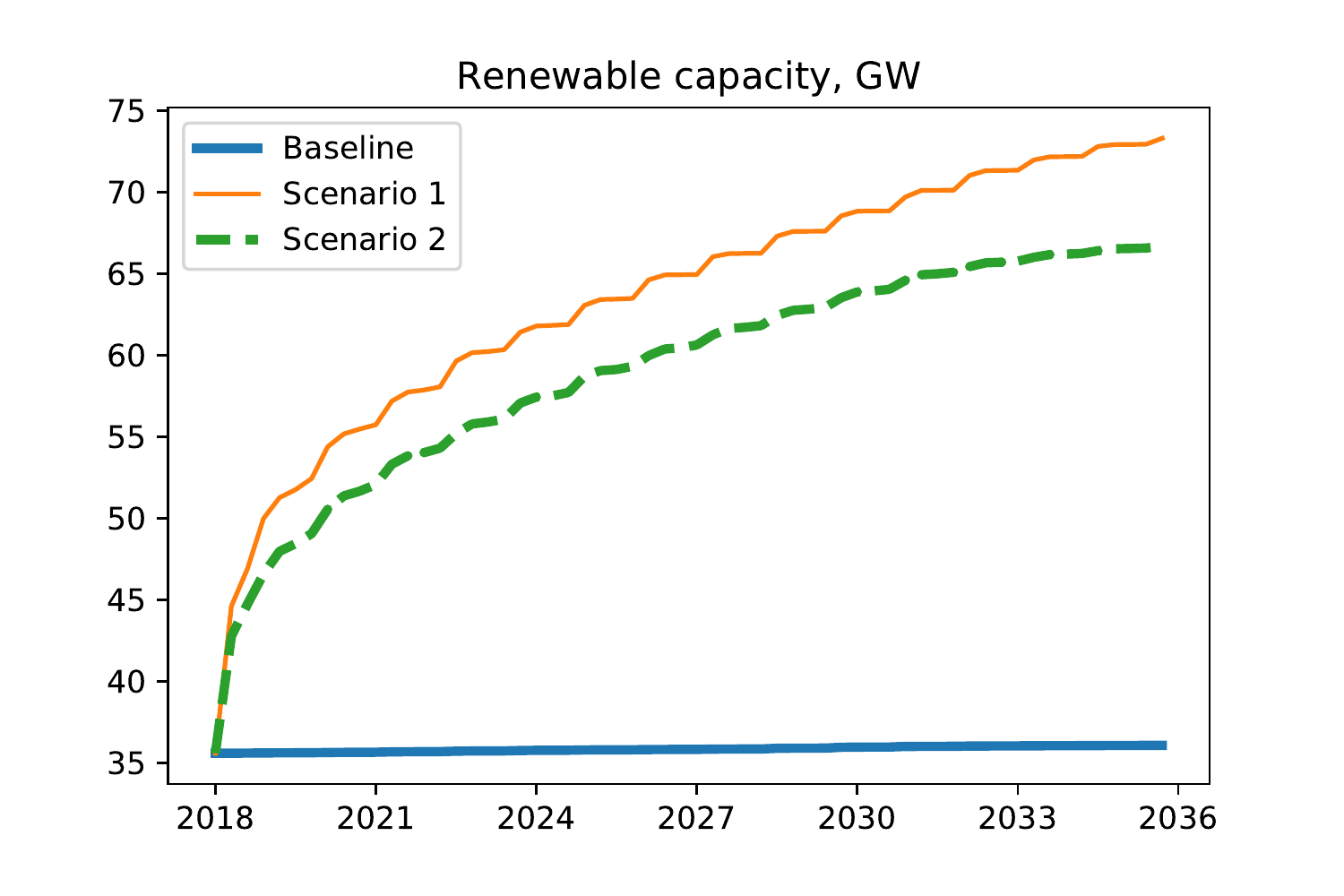}}
\caption{Evolution of conventional and renewable installed capacity in
  the three simulations. Scenario 1: renewable subsidy. Scenario 2:
  renewable subsidy and capacity payments for conventional plants.}
\label{convren.fig}
\end{figure}

\begin{figure}
\centerline{\includegraphics[width=\textwidth]{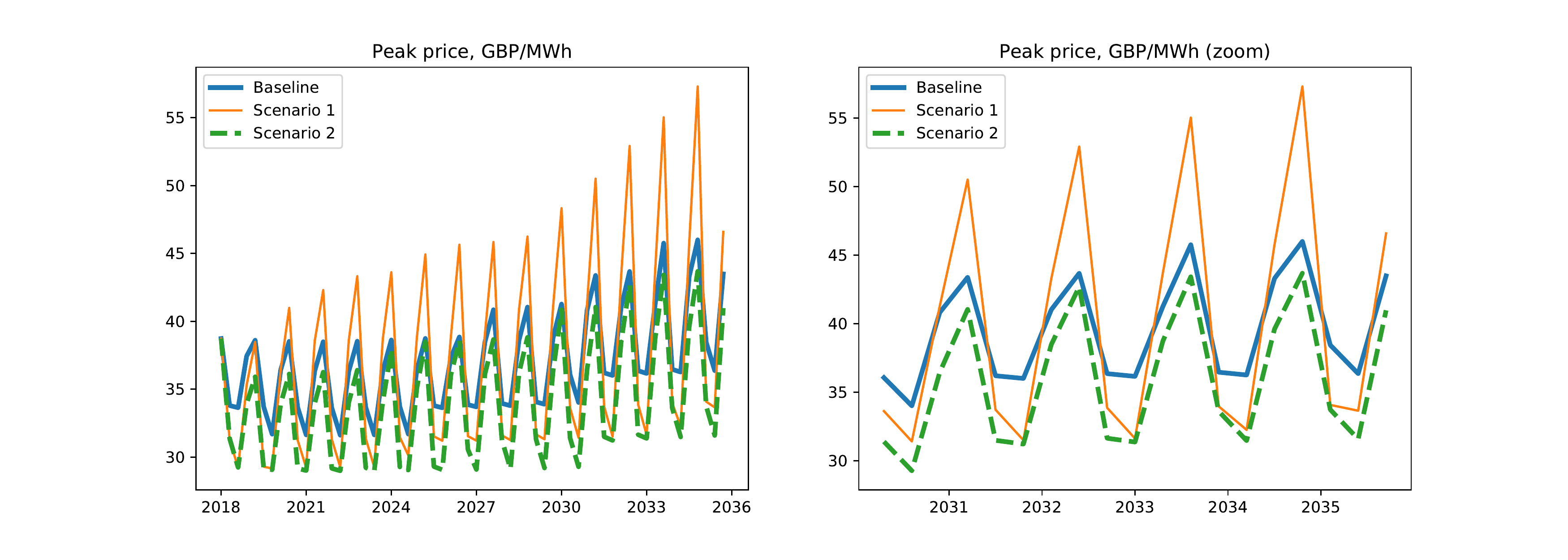}}
\centerline{\includegraphics[width=0.7\textwidth]{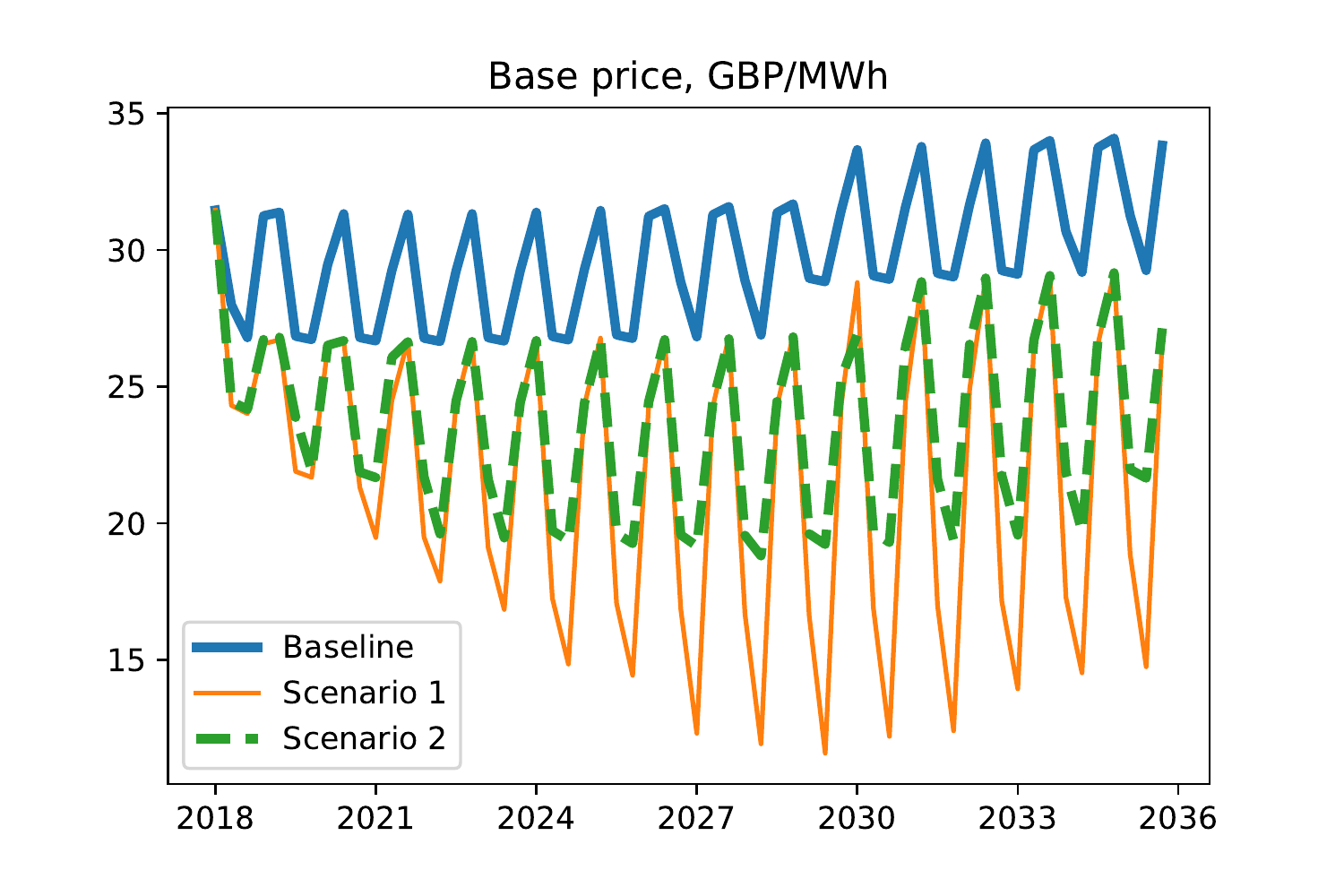}}
\caption{Evolution of the electricity price in
  the three simulations. Top left: peak price. Top right: peak price
  (zoom on the last 4 years). Bottom: base price. 
Scenario 1: renewable subsidy. Scenario 2:
  renewable subsidy and capacity payments for conventional plants.}
\label{price.fig}
\end{figure}

We conclude that while renewable subsidies clearly lead to higher
renewable penetration, this may entail a cost to the consumer in terms
of higher peakload prices. In order to avoid rising prices, the
renewable subsidies must be combined with market or off-market
mechanisms ensuring that sufficient conventional capacity remains in
place to meet the energy demand during peak periods. 

\section{Acknowledgement}
Peter Tankov and Ren\'e A\"id gratefully acknowledge financial support from the ANR
(project EcoREES ANR-19-CE05-0042) and from the
FIME Research Initiative.


\begin{thebibliography}{10}

\bibitem{ALL17}
{\sc R.~A{\"\i}d, L.~Li, and M.~Ludkovski}, {\em Capacity expansion games with
  application to competition in power generation investments}, Journal of
  Economic Dynamics and Control, 84 (2017), pp.~1--31.

\bibitem{AMT18}
{\sc C.~Alasseur, I.~Ben~Tahar, and A.~Matoussi}, {\em An extended mean field
  game for storage in smart grids}, arXiv preprint arXiv:1710.08991,  (2017).

\bibitem{ambrosio2000functions}
{\sc L.~Ambrosio, N.~Fusco, and D.~Pallara}, {\em Functions of bounded
  variation and free discontinuity problems}, vol.~254, Clarendon Press Oxford,
  2000.

\bibitem{BB14}
{\sc F.~Bagagiolo and D.~Bauso}, {\em Mean-field games and dynamic demand
  management in power grids}, Dynamic Games and Applications, 4 (2014),
  pp.~155--176.

\bibitem{kebab}
{\sc M.~Ben~Alaya and A.~Kebaier}, {\em Parameter estimation for the square
  root diffusions: ergodic and nonergodic cases}, Stochastic Models, 28, Iss.4
  (2012), pp.~609--634.

\bibitem{Ben18}
{\sc D.~Benatia}, {\em Functional econometrics of multi-unit auctions: An
  application to the {N}ew {Y}ork electricity market},  (2018).
\newblock Working paper.

\bibitem{B18}
{\sc C.~Bertucci}, {\em Optimal stopping in mean field games, an obstacle
  problem approach}, Journal de Math{\'e}matiques Pures et Appliqu{\'e}es, 120
  (2018), pp.~165--194.

\bibitem{BMR17}
{\sc P.~C. Bhagwat, A.~Marcheselli, J.~C. Richstein, E.~J. Chappin, and L.~J.
  De~Vries}, {\em An analysis of a forward capacity market with long-term
  contracts}, Energy policy, 111 (2017), pp.~255--267.

\bibitem{bouveret2018mean}
{\sc G.~Bouveret, R.~Dumitrescu, and P.~Tankov}, {\em Mean-field games of
  optimal stopping: a relaxed solution approach}, SIAM Journal on Control and
  Optimization,  (to appear).

\bibitem{BLB18}
{\sc C.~Byers, T.~Levin, and A.~Botterud}, {\em Capacity market design and
  renewable energy: Performance incentives, qualifying capacity, and demand
  curves}, The Electricity Journal, 31 (2018), pp.~65--74.

\bibitem{CCZ15}
{\sc S.~Cl{\`o}, A.~Cataldi, and P.~Zoppoli}, {\em The merit-order effect in
  the {I}talian power market: The impact of solar and wind generation on
  national wholesale electricity prices}, Energy Policy, 77 (2015), pp.~79--88.

\bibitem{CPTD12}
{\sc R.~Couillet, S.~M. Perlaza, H.~Tembine, and M.~Debbah}, {\em A mean field
  game analysis of electric vehicles in the smart grid}, in 2012 Proceedings
  IEEE INFOCOM Workshops, IEEE, 2012, pp.~79--84.

\bibitem{ethier2009markov}
{\sc S.~N. Ethier and T.~G. Kurtz}, {\em Markov processes: characterization and
  convergence}, vol.~282, John Wiley \& Sons, 2009.

\bibitem{Fab18}
{\sc N.~Fabra}, {\em A primer on capacity mechanisms}, Energy Economics, 75
  (2018), pp.~323--335.

\bibitem{gomes2015obstacle}
{\sc D.~Gomes and S.~Patrizi}, {\em Obstacle mean-field game problem},
  Interfaces and Free Boundaries, 17 (2015), pp.~55--68.

\bibitem{gomes2014mean}
{\sc D.~Gomes and J.~Sa\'ude}, {\em Mean field games models -- a brief survey},
  Dynamic Games and Applications, 4 (2014), pp.~110--154.

\bibitem{gomes2020mean}
\leavevmode\vrule height 2pt depth -1.6pt width 23pt, {\em A mean-field game
  approach to price formation}, Dynamic Games and Applications,  (2020),
  pp.~1--25.

\bibitem{gourieroux}
{\sc C.~Gouri\'eroux and P.~Val\'ery}, {\em Estimation of a {J}acobi process},
  Preprint,  (2004).

\bibitem{HG13}
{\sc A.~Henriot and J.-M. Glachant}, {\em Melting-pots and salad bowls: The
  current debate on electricity market design for integration of intermittent
  res}, Utilities Policy, 27 (2013), pp.~57--64.

\bibitem{IAE2017}
 {\sc International Energy Agency}, {\em Energy Technology Prospectives
   Report}, June 2017. 

\bibitem{leigh2016electricity}
{\sc Leigh~Fisher~Jacobs}, {\em Electricity generation costs and hurdle rates},
  tech. rep., Department of Energy and Climate Change, 2016.

\bibitem{KP17}
{\sc R.~Kiesel and F.~Paraschiv}, {\em Econometric analysis of 15-minute
  intraday electricity prices}, Energy Economics, 64 (2017), pp.~77--90.

\bibitem{LL07}
{\sc J.-M. Lasry and P.-L. Lions}, {\em Mean field games}, Japanese journal of
  mathematics, 2 (2007), pp.~229--260.

\bibitem{LB15}
{\sc T.~Levin and A.~Botterud}, {\em Electricity market design for generator
  revenue sufficiency with increased variable generation}, Energy Policy, 87
  (2015), pp.~392--406.

\bibitem{forbes}
{\sc B.~Murray}, {\em The paradox of declining renewable costs and rising
  electricity prices}, Forbes,  (2019).

\bibitem{capacityreport}
{\sc Department~of~Business~Energy and Industrial~Strategy}, {\em Capacity market five-year
  review 2014--2019}, tech. rep., UK Government, 2019.

\bibitem{RPR15}
{\sc V.~Rious, Y.~Perez, and F.~Roques}, {\em Which electricity market design
  to encourage the development of demand response?}, Economic Analysis and
  Policy, 48 (2015), pp.~128--138.

\bibitem{Sch15}
{\sc S.~Schwenen}, {\em Strategic bidding in multi-unit auctions with capacity
  constrained bidders: the {N}ew {Y}ork capacity market}, The RAND Journal of
  Economics, 46 (2015), pp.~730--750.

\bibitem{TGKM08}
{\sc R.~Takashima, M.~Goto, H.~Kimura, and H.~Madarame}, {\em Entry into the
  electricity market: Uncertainty, competition, and mothballing options},
  Energy Economics, 30 (2008), pp.~1809--1830.

\bibitem{thomson2015life}
{\sc R.~C. Thomson and G.~P. Harrison}, {\em Life cycle costs and carbon
  emissions of onshore wind power}, tech. rep., University of Edinburgh, 2015.

\bibitem{WV08}
{\sc A.~Weidlich and D.~Veit}, {\em A critical survey of agent-based wholesale
  electricity market models}, Energy Economics, 30 (2008), pp.~1728--1759.

\end{thebibliography}

\appendix
\section{Technical proofs}
{We provide here the complete proof of Lemma \ref{bvariation}.}\\

\textbf{Part i. Step 1. }
Introduce the function
$$
L_t = \left(\overline D_t - \int_{\overline\Omega}
{x(\bar \eta_t(dx)-\eta_t(dx))}\right)_+.
$$
The price process satisfies
$$
P_t(\omega_t,\eta_t) = \inf\{y\in [0,\overline P]: \int_\Omega F(y-x)
\omega_t(dx)  + F_0(y)\geq L_t\}\wedge \overline P.
$$
We would like to show that $L$ has bounded variation on $[0,T]$. 
To this end, let $\psi:[0,T]\mapsto \mathbb R$ be a $C^1$
function. {We first show that for all
$t\in [0,T]$, 
$$
\left|\mathbb E\left[\int_t^T \psi'(s) S^{t,x}_s ds\right] \right|\leq C \|\psi\|_\infty.
$$
for some constant $C<\infty$.
Indeed, by using It\^o's formula, we get:
$$ 
\mathbb{E} \left[\int_t^T \psi'(s)S_s^{t,x}ds  \right]=\mathbb{E} \left[S_T^{s,x}\psi(T)-S_t^{s,x}\psi(t) \right]-\mathbb{E} \left[\int_t^T \psi(s)\bar k(\bar \theta-S_s^{t,x})ds\right].
$$
Then, by using that the process $(S_t^{s,x})$ takes values between $0$ and $1$, the result easily follows.}
 Now, we can consider the test function
$$
u(t,x) = \mathbb E\left[\int_t^T \psi'(s) S^{t,x}_s ds\right]  + C\|\psi\|_\infty.
$$
{First note that, by construction, $u \geq 0$. Moreover, it is easy to check through explicit computation that
$u\in C^{1,2}([0,T]\times \overline \Omega)$.  Moreover, $\overline{\Omega}$ is bounded and consequently, $\frac{\partial
  u}{\partial t} + \overline{\mathcal L} u =  \psi'(t) x$ is bounded}
on $[0,T]\times \overline{\Omega}$. Plugging this
test function into the definition of $\overline{\mathcal A}(\eta_0)$ yields
$$
\int_0^T \psi'(s) \int_{\overline \Omega} x \eta_t(dx) \leq 2C \|\psi\|_\infty,
$$
which means that the total variation of the mapping
$$
t\mapsto \int_{\overline\Omega }x\eta_t(dx),
$$ 
is bounded on $[0,T]$ by $2C$, and thus also $L_t$, has bounded 
variation on $[0,T]$.\\

\textbf{Step 2. }
For a fixed $n$, define
$$
F^n_k(t):= \int_0^{\overline P} F(k\overline P/n-x)
\omega_t (dx) + F_0(k\overline P/n),\quad k=0,\dots,n. 
$$
We now would like to show that these functions have bounded variation
on $[0,T]$,
uniformly on $n,k$, in the sense that for every $n$ and every sequence
of $C^1$ functions
$\psi_k:[0,T]\mapsto \mathbb R$, $k=0,\dots,n$, 
\begin{align}
\sum_{k=0}^n \int_0^T F^n_k(t) \psi'_k(t) dt\leq C \max_{0\leq t\leq
  T} \sum_{k=0}^n |\psi_k |, \label{bvarfn}
\end{align}
where the constant $C$ does not
depend on $k,n$. To this end, as above, we start with the following
estimate, obtained by It\^o formula and integration by parts, where to
save space we omit the superscript $(t,x)$ of the process $C$. 
\begin{align*}
 & \sum_{k=0}^n\mathbb E\left[\int_t^T\psi'_k(s) F(k\overline
  P/n - C_s) ds\right] \\
& = \sum_{k=0}^n\mathbb E\Big[\int_t^T\psi'_k(s) \Big\{F(k\overline
  P/n - x) - \int_t^s F'(k\overline P/n - C_r)k(\theta - C_r) dr \\
 &\qquad +
  \frac{1}{2}\int_t^s F^{\prime\prime}(k\overline P/n -
  C_r)\delta^2 C_r dr\Big\} ds\Big]\\
& = \sum_{k=0}^n(\psi_k(T)-\psi_k(t))F(k\overline
  P/n - x) \\ &+\sum_{k=0}^n\mathbb E\Big[\int_t^T(\psi_k(T)-\psi_k(s)) \Big\{- F'(k\overline P/n - C_s)k(\theta - C_s)  +
  \frac{1}{2} F^{\prime\prime}(k\overline P/n -
  C_s)\delta^2 C_s\Big\} ds\Big]\\
&\leq C \max_{0\leq t \leq T} \sum_{k=0}^n \left|\psi_k(t)\right|,
\end{align*}
because $F(k\overline P/n-x)$ and the terms in curly brackets in the
last line above are bounded by a constant independent from $k$, $n$
and $x$ in view of the properties of $F$. 

Now let us consider the test function
$$
u(t,x) = \sum_{k=0}^n \mathbb E\left[\int_t^T\psi'_k(s) F(k\overline
  P/n - C^{(t,x)}_s) ds\right] + C \max_{0\leq t \leq T} 
\sum_{k=0}^n|\psi_k(t)|. 
$$
It is easy to check that it possesses the required regularity
properties, and satisfies
{$$
\frac{\partial u}{\partial t}(t,x)+\mathcal L u(t,x) = \sum_{k=0}^n\psi'_k(t) F(k\overline P/n - x). 
$$}

{Due to the boundedness property of the map $F$, we get that $\frac{\partial u}{\partial t}+\mathcal L u$ is bounded. Then, we plug it into the definition of $\mathcal A(\omega_0)$ and since 
$\int_0^{\bar{P}} F(k{\overline P}/n - x) \omega_t(dx)=\int_{\Omega} F(k\overline{P}/n - x) \omega_t(dx)$ for all $t \in [0,T]$, the result follows.}\\

\textbf{Step 3.} Let $\rho$ be a mollifier supported on $[-1,1]$, set $\rho_m(x):= m
\rho(mx)$ and define 
$$
F^{n,m}_k(t):= \rho_m * F^{n}_k (t), 
$$
where $F^{n}_k$ is extended by zero value outside the interval
$[0,T]$, so that $F^{n,m}_k$ is well defined on $[0,T]$. Let $\psi_0,\dots,\psi_n$ be a
sequence of test functions. Then, for every $m$ and
for $k=0,\dots,n$, 

{\begin{align*}
\sum_{k=0}^n \int_0^T F^{n,m}_k(t) \psi_k'(t) dt &= \sum_{k=0}^n \int_0^T
  \psi_k'(t) \int_{-1/m}^{1/m}  F^{n}_k(t-s) \rho_m(s)ds\, dt\\
& = \sum_{k=0}^n \int_{-1/m}^{1/m} \rho_m(s) ds \int_0^T  \psi_k'(t)
  F^{n}_k(t-s) dt\\
& = \sum_{k=0}^n \int_{-1/m}^{1/m} \rho_m(s) ds \int_{0}^{T}  \psi_k'(t+s)
  F^{n}_k(t) dt\\
&\leq C \max_{0\leq t\leq
  T}\sum_{k=0}^n |\psi_k(t)| \int_{-1/m}^{1/m} \rho_m(s) ds= C \max_{0\leq t\leq
  T}\sum_{k=0}^n |\psi_k(t)|,
\end{align*}
where we extend $\psi_k$ by constants outside the interval $[0,T]$. The last inequality follows by Step 2.}
By integration by parts, then,
\begin{align*}
\sum_{k=0}^n \int_0^T \frac{d}{dt} F^{n,m}_k(t) \psi_k(t) dt\leq C \max_{0\leq t\leq
  T}\sum_{k=0}^n |\psi_k(t)|,
\end{align*}
for a different constant $C$. Finally, by an approximation argument using the dominated convergence, this
implies that 

{\begin{align*}
\int_0^T \max_{0 \leq k \leq n} \left|\frac{d}{dt} F^{n,m}_k(t)\right| dt\leq C.
\end{align*}}

\textbf{Step 4.} By definition of $F_0$,
$F^{n}_{k+1}(t) - F^{n}_{k}(t)\geq \frac{c\overline P}{n}$, $0\leq k
\leq n-1$.
Now for $y\geq 0$ and $0= x_0<x_1<\dots <x_n$, define the mapping
$\Theta_n$ as follows. 
$$
\Theta_n(y,x_0,\dots,x_n) = \frac{\overline P}{n}
\sum_{k=0}^{n-1}\frac{(y-x_k)_+}{x_{k+1}-x_k}\wedge 1.
$$
Then,
\begin{align*}
\frac{\partial \Theta_n}{\partial y} &= \frac{\overline P}{n}
  \sum_{k=0}^{n-1}\frac{\mathbf 1_{x_k \leq y <x_{k+1}}
  }{x_{k+1}-x_k},\\ \frac{\partial \Theta_n}{\partial x_j} &= \frac{\overline P}{n}
  \frac{\mathbf 1_{x_j \leq y <x_{j+1}}}{(x_{j+1}-x_j)^2} (y-x_{j+1}) 
 - \frac{\overline P}{n}
  \frac{\mathbf 1_{x_{j-1} \leq y <x_{j}}}{(x_{j}-x_{j-1})^2}  (y-x_{j-1}).
\end{align*}
Introduce the discretized price
$$
P^n_t(\omega_t,\eta_t):= \Theta_n(L_t, F^n_0(t),\dots,F^n_n(t)). 
$$
Let $(L^m_t)_{m\geq 1}$ be a sequence of
$C^\infty$ functions approximating
$L_t$ in the sense of Theorem 3.9 in
\cite{ambrosio2000functions}, {and let $F^{n,m}_0,\dots,F^{n,m}_n$ be
defined as in Step 3}. Let $\phi: [0,T]\mapsto \mathbb
R$ be a $C^1$ function. Then,{
\begin{align*}
\int_0^T \phi'(t) P^n_t(\omega_t,\eta_t)dt &= \lim_{m\to \infty} \int_0^T \phi'(t)
\Theta_n(L^m_t, F^{n,m}_0(t),\dots,F^{n,m}_n(t))dt \\ &\leq
2\|\phi\|_\infty\overline P  + \lim_{m\to \infty}
\int_0^T \phi(t) \left\{\frac{\partial \Theta_n}{\partial y}
\frac{d}{dt}L^{m}_t + \sum_{j=0}^n \frac{\partial \Theta_n}{\partial
  x_j} \frac{d}{dt} F^{n,m}_j(t)\right\}dt\\
&\leq 2\|\phi\|_\infty\overline P + \|\phi\|_\infty \|\frac{\partial
  \Theta_n}{\partial y}\|_\infty \lim_m\int_0^T |\frac{d}{dt}L^{m}_t|
  dt \\ &+ \|\phi\|_\infty \lim_m \int_0^T \sum_k |\frac{\partial
  \Theta_n}{\partial x_k}|\max_k |\frac{d}{dt} F^{n,m}_k(t)|dt.
\end{align*}
Since $\frac{\partial
  \Theta_n}{\partial x_k}(t) \neq 0$ for only two $k$, we deduce that $|\frac{\partial
  \Theta_n}{\partial x_k}(t)| \leq 2c$ and therefore, 
\begin{align}\label{boundedvar}
\int_0^T \phi'(t) P^n_t(\omega_t,\eta_t)dt &\leq 2\|\phi\|_\infty\overline P + \|\phi\|_\infty \|\frac{\partial
  \Theta_n}{\partial y}\|_\infty \lim_m\int_0^T |\frac{d}{dt}L^{m}_t|
  dt \\ &+ \|\phi\|_\infty  \|\sum_k \frac{\partial
  \Theta_n}{\partial x_k}\|_\infty \lim_m \int_0^T \max_k |\frac{d}{dt} F^{n,m}_k(t)|dt \nonumber \\ &\leq 2\|\phi\|_\infty\overline P + c\|\phi\|_\infty \lim_m\int_0^T |\frac{d}{dt}L^{m}_t|
  dt \nonumber \\ &+ 2c \|\phi\|_\infty {\lim_m \int_0^T \max_k
         |\frac{d}{dt} F^{n,m}_k(t)|dt}\leq C \|\phi\|_\infty. \nonumber 
\end{align}}
for some constant $C$, by the { estimates provided in Step 3.} 
This shows that the total variation of $P^n$ on $[0,T]$ is bounded
uniformly on $n$. {On the other hand, by construction, it is easy to
see that $|P^n_t - P_t|\leq \frac{\overline P}{n}$, so that $P_t^n \to
P_t$ as $n\to \infty$ for all $t$. Then, by passing to the limit in \eqref{boundedvar}, we can conclude that $P_t$
has bounded variation on $[0,T]$.}\\

Part ii. By the arguments of the first part of the proof, the mappings  
$$
t\mapsto \int_{\overline \Omega} x \eta^m_t(dx)
$$
and, for every $n$ and $k=0,\dots,n$, 
$$
t\mapsto \int_\Omega F(k\overline P/n - x)\omega^m_t(dx) +
F_0(k\overline P/n)
$$ 
have variation bounded uniformly on $m$. Therefore, one can find a
subsequence $(m_q)_{q\geq 1}$ along which these mappings converge to
certain limits in $L^1([0,T])$. Moreover, since $\overline \Omega$ is
bounded, in view of weak convergence of measures, for any bounded continuous function $\phi:[0,T]\mapsto \mathbb R$,
$$
\lim_m \int_0^T \phi(t)\int_{\overline \Omega} x \eta^m_t(dx) =
\int_0^T \phi(t)\int_{\overline \Omega} x \eta^*_t(dx),
$$
which means that also 
$$
\int_{\overline \Omega} x \eta^{m_q}_\cdot(dx) \xrightarrow{L^1([0,T])} \int_{\overline \Omega} x \eta^*_\cdot(dx).
$$
Similar arguments show that 
$$
\int_\Omega F(k\overline P/n - x)\omega^{m_q}_\cdot(dx)
\xrightarrow{L^1([0,T])}\int_\Omega F(k\overline P/n -
x)\omega^*_\cdot(dx). 
$$
{Since the mapping $\Theta_n$ introduced at Step 4 is Lipschitz,} this
implies that the sequence of discretized prices
{$P^n_\cdot(\omega^{m_q}_t, \eta^{m_q}_\cdot)$ also converges in
$L^1([0,T])$ to the discretized price $P^n_\cdot(\omega^*_\cdot,
\eta^*_\cdot)$. On the other hand, we have seen that
$|P^n_t(\omega^*_t,\eta^*_t) - P_t(\omega^*_t,\eta^*_t)|\leq \frac{\overline
  P}{n}$. Therefore, for any $\varepsilon>0$, by taking $n\geq
\frac{3\overline P T}{\varepsilon}$} and then choosing $q$ such that 
$$
\|P^n_\cdot(\omega^{m_q}_\cdot, \eta^{m_q}_\cdot) - P^n_\cdot(
\omega^{*}_\cdot, \eta^{*}_\cdot)\|_{L^1} \leq \frac{\varepsilon}{3}, 
$$
we have that 
$$
\|P_\cdot(\omega^{m_q}_\cdot, \eta^{m_q}_\cdot) - P_\cdot(
\omega^{*}_\cdot, \eta^{*}_\cdot)\|_{L^1} \leq \varepsilon,
$$
which proves the second part of the lemma.

\end{document}